   \newtheorem{theorem}{Theorem}[section]
   \newtheorem{lemma}{Lemma}[section]
\newtheorem{rem}{Remark}[section]
\newtheorem{prop}{Proposition}[section]
\newtheorem{coro}{Corollary}[section]
\newcommand{\Aut}{\mathrm{Aut}}
\newcommand{\Def}{\mathrm{Def}}
\newcommand{\Hom}{\mathrm{Hom}}
\newcommand{\ind}{\mathrm{Ind}}
\newcommand{\Ind}{\mathrm{Ind}}
\newcommand{\infl}{\mathrm{Inf}}
\newcommand{\Inf}{\mathrm{Inf}}
\newcommand{\lexp}[2]{\setbox0=\hbox{$#2$} \setbox1=\vbox to
                 \ht0{}\,\box1^{#1}\!#2}
\newcommand{\res}{\mathrm{Res}}
\newcommand{\Res}{\mathrm{Res}}
\newcommand{\Out}{\mathrm{Out}}
\newcommand{\out}{\mathrm{Out}}
\newcommand{\tw}{\mathrm{Tw}}
\title{Fibered $p$-biset functor structure of the fibered Burnside rings}
\author{Olcay Coşkun \and Deniz Yılmaz}
\address{Boğaziçi University Department of Mathematics Bebek İstanbul Turkey} 
\address{University of California, Santa Cruz Department of Mathematics CA 95064 USA}
\thanks{
Both of the authors are supported by Tübitak-1001-113F40.}
\begin{document}

\maketitle

\begin{abstract}
We determine the composition factors of the $A$-fibered Burnside functor $kB^{A}$ for $p$-groups over a field $k$ of characteristic $q$ with $q\neq p$ and cyclic fiber group $A$. We also show that, in this case, $kB^{A}$ is uniserial. 
\end{abstract}
\section{Introduction}

In representation theory, it is of the utmost importance to study group actions on sets. For the simplest case, one may 
consider the action of a finite group $G$ on a finite set $X$. This action reveals the theory of Burnside rings. Considering the common features shared by Burnside rings and representation 
rings, Dress \cite{dressmackey} and Green  \cite{green} introduced Mackey functors to give a unified treatment of these 
objects. 

There are two ways to let two groups act on a set $X$.
First, suppose that we have two groups $G$ and $H$. By considering the action of $G$ on the left and the action of $H$ on the right, we may let $G\times H$ act on $X$.
In this case, the set $X$ is called a $(G,H)$-biset and this leads us to the theory of biset functors introduced by Bouc in \cite{boucdecomposition}. One of the most important applications of biset functors, among many others, is the final determination of the structure of the Dade group by Bouc \cite{BDade}. Also in \cite{boucthevenaz}, Bouc and Thev\'{e}naz studied the Burnside functor of $p$-groups. They obtained that the Burnside functor of $p$-groups over a field of characteristic zero has two composition factors, one of which is the functor of torsion-free part of the Dade group and the other one is the functor of rational representations.

As a second way of letting two groups act on a set, we may consider the action of $A\times G$ on $X$ where $G$ is a finite group and $A$ is an abelian group acting on $X$ freely. 
Since the $A$-action is free, such an action of $A\times G$ on $X$ can be considered as $G$ acting on the $A$-fibers and in this case, the set $X$ is called an $A$-fibered $G$-set.
These objects were introduced by Dress in \cite{dress} and studied by Boltje \cite{boltje} and Barker \cite{barker}.

In \cite{boltjecoskun}, Boltje and the first author combined these two notions and introduced $A$-fibered $(G,H)$-bisets.  Our 
aim in the present paper is to extend the results of Bouc and Thev\'{e}naz on Burnside functors and to determine the composition 
factors of the $A$-fibered Burnside functor $kB^{A}$ of $p$-groups over a field $k$ of characteristic 
$q$ with $q\neq p$ and a cylic fiber group $A$. To be more precise, we show that in this case, the functor $kB^{A}$ is uniserial with composition factors
parameterized by sets of elementary abelian $p$-groups depending only on the prime $p$ and the characteristic $q$, and not on the particular fiber 
group $A$. 

As in the case of the (ordinary) biset functor of the Burnside functor, when $q=0$, the fibered Burnside functor $kB^{A}$ has only two composition 
factors. One of the factors can be identified with a subfunctor of the functor of complex characters. The other factor has the cyclic group $C_{p}$ as its 
minimal group, but we are unable to identify it with a natural construction. 

In Section \ref{prel}, we review the theory of fibered bisets and fibered biset functors from \cite{boltjecoskun}, together with some specializations to the
case of abelian groups. In the next section, we introduce fibered Burnside rings, recall the idempotent formula for its primitive idempotents form 
\cite{barker} and determine the action of basic fibered bisets on these idempotents. Our main results regarding the structure of the fibered Burnside 
functors are contained in the last section.

\section{Fibered Bisets and Fibered Biset Functors}\label{prel}
In this section, we recall basic theory of fibered bisets from \cite{boltjecoskun} and specialize certain results to the case of abelian groups with
sufficiently large fiber groups.
\subsection{Fibered bisets}
Let $G$ be a finite group, $A$ be an abelian group and $X$ be a set. We call $X$ an \textit{$A$-fibered $G$-set} if $X$ is an $A
\times G$-set such that the action of $A$ is free with finitely many orbits. We denote by ${}_G\mathrm{set}^A$
the category of $A$-fibered $G$-sets. Here the morphisms are given by $A\times G$-equivariant functions. The 
operation of disjoint union of sets induces a coproduct on ${}_G\mathrm{set}^A$ and  we denote by $B^A(G)$ the 
Grothendieck group of this category with respect to disjoint unions. The group $B^A(G)$ is called the 
\textit{$A$-fibered Burnside group} and it was first introduced, in a more general way, by Dress in \cite{dress}.

The basic objects in ${}_G{\mbox{\rm set}}^A$ with respect to disjoint union are the transitive ones. We say that an 
$A$-fibered $G$-set is \textit{transitive} provided that 
the $G$-action on the set of $A$-orbits is transitive. It is easy to show that there is a bijective correspondence
between the isomorphism classes $[X]$ of transitive $A$-fibered $G$-sets and the $G$-conjugacy classes of 
pairs $(U,\phi)$ where $U$ is a subgroup of $G$ and $\phi: U\rightarrow A$ is a group homomorphism. The 
bijection is given by associating $X$ to $(U, \phi)$ if $U$ is the stabilizer of some $A$-orbit in $X$ and $U$ acts on 
this $A$-free orbit via $\phi$. We call the pair $(U,\phi)$ corresponding to $X$ the \textit{stabilizing pair} of $X$. 
We denote by $\mathcal M_G(A)$ the set of all such pairs $(U, \phi)$, and write $[U,\phi]_G$ for the isomorphism class 
of the $A$-fibered $G$-set with the stabilizing pair $(U,\phi)$. 

If $H$ is another finite group, we write ${}_G\mathrm{set}_H^A$ for the category of $A$-fibered $G\times H$-sets. 
By the usual convention, we regard any object in this category as an $A$-fibered $(G,H)$-biset. We also regard
any $A$-fibered biset as an operator and, in this case, we write $\big[\frac{G\times H}{U,\phi}\big]$ instead of 
$[U,\phi]_{G\times H}$. With this notation, any ordinary biset $\big[\frac{G\times H}{U} \big]$ is regarded as an $A$-fibered biset as 
$\big[\frac{G\times H}{U,1} \big]$ where $1$ denotes the trivial homomorphism sending any element of $U$ to the identity element
of $A$.

Further let $K$ be another finite group, $X$ an $A$-fibered $(G,H)$-biset and $Y$ an $A$-fibered $(H,K)$-biset. 
We define the tensor product $X\otimes_{AH} Y$ of $X$ and $Y$ as the set of $A$-free orbits of the usual amalgamated product
$X\times_{AH} Y$ of the bisets $X$ and $Y$. Recall that $X\times_{AH} Y$ is the set of $A\times H$-
orbits in $X\times Y$ under the $A\times H$-action given by
\[
(a,h)\cdot (x,y) = (x\cdot (a^{-1},h^{-1}),(a,h)\cdot y)
\]
for any $(a,h)\in A\times H$ and $(x,y)\in X\times Y$. We denote the $A\times H$-orbit containing the pair $(x, y)$ by $(x,_{AH} y)$. Then the group $A$ acts 
on the set $X\times_{AH} Y$ via $a\cdot (x,_{AH} y) = (a\cdot x,_{AH} y) = (x,_{AH} a\cdot y)$. Given an $A$-free $A\times H$-orbit $(x,_{AH} y)$, we denote
its image in the subset $X\otimes_{AH} Y$ by $x\otimes_{AH} y$ or simply by $x\otimes y$, when there is no risk of confusion. With this notation, $X\otimes_{AH} Y$ becomes an $A$-fibered $(G,K)$-biset via
\[
(g,a)\cdot (x\otimes y) \cdot k = (g\cdot a \cdot x) \otimes (y\cdot k)
\]
for $g\in G, a\in A$ and $k\in K$.
We introduce further notation to determine the product of two transitive $A$-fibered bisets.

Given a pair $(U,\phi)\in \mathcal M_{G\times H}(A)$, the subgroup $U$ determines the following datum:
Let $P = p_1(U)$ and $Q= p_2(U)$ be the first and the second 
projections of $U$. Let also $K = k_1(U) = p_1(U\cap (G\times 1))$ and $L = k_2(U) = p_2(U\cap (1\times H))$. Then
we have that $K\unlhd P$ and $L\unlhd Q$. Moreover the groups $P/K$ and $Q/L$ are isomorphic and a canonical 
isomorphism $\eta: Q/L\to P/K$ is determined by the subgroup $U$ via $\eta(hL) = gK$ if $(g,h)\in U$. Conversely, if a 
quintuple $(P,K,\eta, L,Q)$ where $K\unlhd P\le G$ and $L\unlhd Q\le H$ and $\eta:Q/L\to P/K$ an isomorphism
is given, a subgroup $U$ with the given invariants is uniquely determined by $U = \{(g,h)\in P\times Q \mid \eta(hL) =
 \eta(gK)\}$. This is known as Goursat's Theorem. We further write $\phi\vert_{K\times L} = \phi_1\times\phi_2^{-1}$.

With this notation, if both $X$ and $Y$ are transitive, say $X = \big[\frac{G\times H}{U,\phi}\big]$ and 
$Y= \big[\frac{H\times K}{V,\psi}\big]$, then by \cite[Corollary 2.5]{boltjecoskun}, the above tensor product becomes
\begin{equation}\label{product:transitives}
\Big[\frac{G\times H}{U,\phi}\Big]\otimes_{AH} \Big[\frac{H\times K}{V,\psi}\Big] = \sum_{\substack{x\in [p_2(U)
\backslash H/p_1(V)]\\ \phi_2\vert_{H_x} = \lexp{x}\psi_{1}\vert_{H_x}}} \Big[\frac{G\times K}{U\ast \lexp{(x,1)}V,\phi\ast\lexp{(x,1)}\psi}\Big]
\end{equation}
where $H_x = k_2(U)\cap\lexp{x}k_1(V)$, the subgroup $U\ast V$ is the composition 
\[
U\ast V = \{ (g,k)\in G\times K\vert (g,h)\in U, (h,k)\in V \, \mbox{\rm for some}\, h\in H \}
\]
and the homomorphism $\phi\ast \psi: U\ast V\to A$ is defined by 
\[
(\phi\ast\psi)(g,k) = \phi(g,h)\cdot\psi(h,k)
\]
for some choice of $h\in H$ such that $(g,h)\in U$ and $(h,k)\in V$. Note that the homomorphism $\phi\ast\psi$ is independent of 
the choice of $h\in H$. The equation $(\ref{product:transitives})$ is sometimes referred as the \textit{Mackey product formula}.

\subsection{Decompositions for abelian groups}
This product allows us to decompose any $A$-fibered $(G,H)$-biset into basic ones, as in the case of ordinary bisets. We refer to \cite{boltjecoskun} for 
further details. In this paper, we only need the decomposition of fibered bisets for abelian groups with sufficiently large fiber group $A$, which we 
discuss next. First, we introduce the notation for basic fibered bisets which is used throughout the paper. Let $H$ be a subgroup of $G$ and $N$ be a 
normal subgroup of $G$. Also let $G^\prime$ be another finite  group with a group isomorphism $\lambda: G\to G^\prime$. 
  
Following Bouc \cite{bouc}, we define the \emph{induction} from $H$ to $G$ and \emph{restriction} from $G$ to $H$ as the transitive bisets
\[
\ind_H^G := {}_GG_H,\quad \res^G_H := {}_HG_G
\]
where we regard the set $G$ as a $(G,H)$-biset (resp. as an $(H,G)$-biset) in the usual way, via left and right 
multiplication by the corresponding group. We also define \emph{deflation} from $G$ to $G/N$ and \emph{inflation} from $G/N$ 
to $G$ as the transitive bisets
\[
\Def_{G/N}^G := {}_{G/N}(G/N)_G,\quad \infl_{G/N}^G:= {}_G(G/N)_{G/N}.
\]
As above, we regard the set $G/N$ as a $(G/N,G)$-biset (and as a $(G,G/N)$-biset) in the usual way. 
Finally, we 
define the \emph{transport of structure} from $G^\prime$ to $G$ through $\lambda$ as the biset
\[
\mbox{\rm c}_{G,G^\prime}^\lambda:= {}_GG_{G^\prime}
\]
where the $G$-action is the left multiplication and the $G^\prime$-action is multiplication through $\lambda$.
In all these cases, the $A$-action is trivial.

Another basic fibered biset that we need in this paper is the twist biset defined as follows. Let 
$\phi\in G^* = \Hom(G,A)$ be a homomorphism from $G$ to $A$. Then the \textit{twist} by $\phi$ at $G$ is the $A$-fibered 
$(G,G)$-biset  
$$\tw_G^\phi=\Big( \frac{G\times G}{\Delta(G), \Delta(\phi)}\Big).$$
Here, for any pair $(G, \phi)\in \mathcal M_G(A)$, the diagonal inclusion $(\Delta(G),\Delta(\phi))$ of $(G,\phi)$ in 
$\mathcal M_{G\times G}(A)$ is the pair consisting of the diagonal inclusion of the group $G$ in $G\times G$ and
the diagonal homomorphism $\Delta(\phi)$ given by $\Delta(\phi)(g, g) = \phi(g)$. 

Now let $G$ be a finite abelian group and $A$ be an abelian group. We say that $A$ is \emph{splitting for $G$} if $A$ contains an 
element of order exp$(G)$. Note that, in this case, homomorphisms $G\to A$ can be identified with homomorphisms $G\to \mathbb C$.
For the next theorem, we let $G$ and $H$ be abelian groups and $A$ be splitting for both $G$ and $H$.  
Then given a pair $(U, \phi)\in \mathcal M_{G\times H}(A)$,  we write $(P,K,\eta,Q,L)$ for the invariants 
$(p_1(U),k_1(U), \eta,p_2(U),k_2(U))$ where $\eta$ is the canonical isomorphism between $P/K$ and $Q/L$ determined by $U$. We also write 
$\tilde \phi = \tilde\phi_1\times \tilde\phi_2$ for an extension of $\phi$ to $P\times Q$ which exists by the above assumption on $A$.
\begin{theorem}\label{thm:decomposition}
Assume the above notation. Then there is an isomorphism of $A$-fibered $(G,H)$-bisets
\[
\Big(\frac{G\times H}{U,\phi}\Big) \cong \Ind_{P}^G \tw_{P}^{\tilde \phi_1}\Inf_{P/K}^{P}
\mbox{\rm c}_{P/K,Q/L}^\eta\Def^{Q}_{Q/L}\tw^{\tilde \phi_2}_{Q}\Res^H_{Q}.
\]
\end{theorem}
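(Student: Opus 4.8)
The plan is to reduce the decomposition to the Mackey product formula \eqref{product:transitives}, which already tells us how to multiply transitive fibered bisets. Concretely, the right-hand side is a product of transitive fibered bisets (each of the basic fibered bisets $\Ind$, $\tw$, $\Inf$, $\mathrm{c}^\eta$, $\Def$, $\Res$ is transitive, being of the form $\big(\frac{\ast\times\ast}{\ast,\ast}\big)$ for an explicit stabilizing pair), so by associativity of $\otimes_{AA}$ and \eqref{product:transitives} the product collapses to a single transitive fibered biset. One then has to check that its stabilizing pair, computed via the iterated $\ast$ operation on subgroups and the iterated $\ast$ operation on the homomorphisms, equals $(U,\phi)$ up to $G\times H$-conjugacy. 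I would carry this out from the inside out, pairing factors so that the fiber-matching condition ``$\phi_2\vert_{H_x}=\lexp{x}\psi_1\vert_{H_x}$'' in \eqref{product:transitives} is automatically satisfied at each stage (it will be, because at each stage one of the two factors is an untwisted structural biset or the twist is supported exactly where it needs to be).

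First I would record the stabilizing pairs of the six factors as subgroups of the relevant $\ast\times\ast$: for instance $\tw_P^{\tilde\phi_1}$ has pair $(\Delta(P),\Delta(\tilde\phi_1))$; $\Inf_{P/K}^P$ corresponds to $\{(x,xK):x\in P\}$ with trivial homomorphism; $\mathrm{c}_{P/K,Q/L}^\eta$ to the graph of $\eta$ with trivial homomorphism; $\Def_{Q/L}^Q$ to $\{(xL,x):x\in Q\}$; $\Ind_P^G$ to $\Delta(P)\le G\times P$; and $\Res_Q^H$ to $\Delta(Q)\le Q\times H$. Then I would compute the composite of the ``middle four'' factors $\tw_P^{\tilde\phi_1}\Inf_{P/K}^P\mathrm{c}^\eta\Def_{Q/L}^Q\tw_Q^{\tilde\phi_2}$: at the subgroup level the iterated $\ast$ produces $\{(g,h)\in P\times Q : \eta(hL)=gK\}$, which is exactly the Goursat subgroup $U$; at the homomorphism level the iterated $\ast$ produces a homomorphism $U\to A$ which on $(g,h)$ evaluates, after tracing through the definition $(\phi\ast\psi)(g,k)=\phi(g,h)\psi(h,k)$, to $\tilde\phi_1(g)\cdot\tilde\phi_2(h)^{-1}$; by the normalization $\tilde\phi=\tilde\phi_1\times\tilde\phi_2$ extending $\phi$ and the convention $\phi\vert_{K\times L}=\phi_1\times\phi_2^{-1}$ this agrees with $\phi$ on $U$. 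Finally, flanking by $\Ind_P^G$ on the left and $\Res_Q^H$ on the right only enlarges the ambient group from $P\times Q$ to $G\times H$ without changing the subgroup or the homomorphism, giving $\big(\frac{G\times H}{U,\phi}\big)$.

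The main obstacle is bookkeeping: each application of \eqref{product:transitives} requires verifying that the double-coset sum has exactly one surviving term, i.e.\ that the set over which $x$ ranges is a single double coset and the fiber-compatibility condition $\phi_2\vert_{H_x}=\lexp{x}\psi_1\vert_{H_x}$ holds there. In our tower this is clean because at every step one of the two composable groups acts transitively on the relevant set of $A$-orbits on one side (the structural bisets are ``all of a group''), so $p_2(U)\backslash H/p_1(V)$ is trivial and the fiber condition reduces to comparing the twist character against the trivial character on a subgroup where the twist is defined to be trivial. The only place genuine care is needed is the junction where the two twists $\tw_P^{\tilde\phi_1}$ and $\tw_Q^{\tilde\phi_2}$ meet across the $\Inf$-$\mathrm{c}^\eta$-$\Def$ bridge: there one must check that the fiber condition amounts to $\tilde\phi_1$ and $\tilde\phi_2$ agreeing with $\phi$ on $K$ and $L$ respectively, which is precisely the hypothesis that $\tilde\phi$ extends $\phi$ together with the splitting assumption on $A$ guaranteeing such an extension exists. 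I would close the argument by noting that the resulting pair is well-defined up to $G\times H$-conjugacy, which is all that is claimed, so the ambiguity in the choice of extension $\tilde\phi$ is harmless.
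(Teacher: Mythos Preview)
Your approach is essentially identical to the paper's: compute the iterated $\ast$-product of the stabilizing pairs using the Mackey formula \eqref{product:transitives}, invoke the classical (non-fibered) decomposition to identify the subgroup as $U$, and then verify the composite homomorphism agrees with $\phi$. The paper is terser---it cites Bouc's Lemma 2.3.26 for the subgroup part and writes out only the homomorphism check---but the substance is the same.

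One bookkeeping slip to watch: your computed homomorphism $\tilde\phi_1(g)\,\tilde\phi_2(h)^{-1}$ should come out as $\tilde\phi_1(g)\,\tilde\phi_2(h)$, matching the paper's convention that $\tilde\phi=\tilde\phi_1\times\tilde\phi_2$ is the extension of $\phi$ to $P\times Q$ (the decomposition $\phi_1\times\phi_2^{-1}$ pertains to the \emph{restriction} $\phi\vert_{K\times L}$, which is a different object). This does not affect the method, only the final identification.
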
 

\begin{proof}
We evaluate the product on the right hand side. Note that, in the above product, the stabilizer for $\tw_P^{\tilde\phi_1}$ is $\Delta(P)$  and that for 
$\tw_Q^{\tilde\phi_2}$ is $\Delta(Q)$. Also, it is clear from its definition that for any subgroups $V\le G\times P$
and $V^\prime\le Q\times H$, we have $V\ast \Delta(P) = V$ and $\Delta(Q)\ast V^\prime = V^\prime$. Thus, the $\ast$-product of the 
stabilizers on the right hand side gives the subgroup $U$, as in the case of ordinary bisets given in Lemma 2.3.26 in \cite{bouc}. Thus we only need to check that 
$\phi = (1\ast(\tilde\phi_1 \ast 1))\ast ((1\ast\tilde \phi_2)\ast 1)$.
Let $(g,h)\in U$. Then we have
\begin{eqnarray*}
\big((1\ast(\tilde\phi_1 \ast 1))\ast ((1\ast\tilde \phi_2)\ast 1)\big)(g,h) &=& (1\ast(\tilde\phi_1 \ast 1))(g,gK)\cdot ((1\ast\tilde \phi_2)\ast 1)(hL,h)\\
&=& \tilde\phi_1(g)\cdot \tilde \phi_2(h) = (\tilde\phi_1 \times \tilde\phi_2)(g,h) = \phi(g,h)
\end{eqnarray*}
which completes the proof of the theorem. \qed
\end{proof}

\begin{rem} Suppose $A$ satisfies Hypothesis 10.1 in \cite{boltjecoskun}, so that tor$A$ is divisible. Then the above condition on $A$ is satisfied
trivially and hence the above theorem holds in this case. Note that \cite[Theorem 10.14]{boltjecoskun} describes the decomposition of an $A$-fibered
$(G,H)$-biset for any finite groups $G$ and $H$ with the fiber group $A$ satisfying the hypothesis.
\end{rem}

\begin{rem} Our main interest in this paper is the case where $A$ is a finite non-trivial cyclic $p$-group for a prime $p$ and both $G$ and $H$ are elementary abelian $p$-groups. Clearly, the above theorem also holds in this case.
\end{rem}


\subsection{Fibered Biset Functors} 
Let $A$ be an abelian group and $R$ be a commutative ring with unity. Let $\mathcal C:= \mathcal C_R^A$ denote 
the following category. The objects of $\mathcal C$ are all finite groups. Given two finite groups $G$ and $H$, we define
$$\Hom_{\mathcal C}(G,H) := R\otimes B(H\times G,A).$$
 The composition is the $R$-linear extension of the tensor product of $A$-fibered bisets introduced above.

Now an \emph{$A$-fibered biset functor over $R$} is an $R$-linear functor $\mathcal C\to {}_R$Mod. The class
of all $A$-fibered biset functors together with natural transformations between them form a category, denoted by
$\mathcal F:=\mathcal F_R^A$. Since ${}_R$Mod is an abelian category, the category $\mathcal F$ is also 
abelian. By the general theory for simple functors in such categories, see \cite[Section 2]{bouc}, simple fibered biset functors can be parameterized
by their evaluations at groups which are of minimal order having non-zero evaluation. Explicit classification of simple fibered biset functors over a field is 
done in \cite[Section 9]{boltjecoskun}. In this paper, we only need a special case of this parametrization, where the group $G$ is an elementary abelian 
$p$-group for a prime number $p$. It turns out that our techniques are also valid for functors parameterized by abelian groups. Next we consider this 
special case for completeness.
 

\subsection{Minimal evaluations of simple fibered biset functors for abelian groups}
In this section, we consider simple fibered biset functors with minimal non-zero evaluations at abelian groups. Let $G$ be a finite group and $F$ 
be a fibered biset functor. Also let $E_G$ denote the endomorphism ring of $G$ in $\mathcal C$. Clearly, the evaluation $F(G)$ is an $E_G$-module.
Furthermore, if $F$ is simple, then $F(G)$ is a module for the quotient algebra $\bar E_G =E_G/I_G$. Here $I_G$ denote the ideal generated by 
elements in $E_G$ which factor through a group of smaller order. The structure of $\bar E_G$ is described in \cite[Section 8]{boltjecoskun}. It turns out
that when $G$ is abelian, its structure is simpler, as we describe below. 

For the rest of this section, let $G$ be a finite abelian group and $A$ be an abelian group which is splitting for $G$. We first describe the structure of 
$\bar E_G$. Note that a similar result in the case where $A$ is cyclic of prime order and $G$ is arbitrary can be found in Lemma 15 of \cite{R}.

Let $G^*\rtimes \out(G)$ denote the semidirect product of the groups $G^*$ and $\out(G)$ where $\out(G)$ acts on $G^*$ via composition, that is,
$(\lambda\cdot \phi)(g) = \phi(\lambda(g))$ for any $g\in G, \phi\in G^*$ and $\lambda\in\out(G)$. With this notation, we have the following result.

\begin{theorem}
Let $G$ be a finite abelian group and $A$ be an abelian group which is splitting for $G$. Then there is an isomorphism of $R$-algebras
\[
\bar E_G \cong R[G^*\rtimes \Out(G)].
\]
\end{theorem}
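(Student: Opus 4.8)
The plan is to write down an explicit $R$-linear map from $R[G^*\rtimes\Out(G)]$ to $\bar E_G$, show it is a bijection on a natural basis of each side, and then check that it respects multiplication. For the source, recall that $G^*\rtimes\Out(G)$ has as $R$-basis the pairs $(\phi,\lambda)$ with $\phi\in G^*$ and $\lambda\in\Out(G)$. To such a pair I would associate the class in $\bar E_G$ of the $A$-fibered $(G,G)$-biset $\tw_G^\phi\,\mathrm{c}_{G,G}^{\tilde\lambda}$, where $\tilde\lambda\in\Aut(G)$ is any lift of $\lambda$; by Theorem \ref{thm:decomposition} (applied with $P=Q=G$, $K=L=1$, $\eta=\tilde\lambda$) this is precisely the basic biset $\big(\frac{G\times G}{U,\phi'}\big)$ attached to the graph $U=\{(g,\tilde\lambda(g)):g\in G\}$ of $\tilde\lambda$ with $\phi'$ the homomorphism $(g,\tilde\lambda(g))\mapsto\phi(g)$. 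Changing the lift $\tilde\lambda$ by an inner automorphism does not change the $G\times G$-conjugacy class of $(U,\phi')$ since $G$ is abelian and inner automorphisms are trivial, so the map is well defined on $\Out(G)$; this is where abelianness of $G$ enters.

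Next I would identify a basis of $\bar E_G$. By the Goursat description recalled in Section \ref{prel}, a basis of $E_G=R\otimes B(G\times G,A)$ is indexed by $G\times G$-conjugacy classes of pairs $(U,\phi)\in\mathcal M_{G\times G}(A)$, equivalently by quintuples $(P,K,\eta,Q,L)$ together with $\phi\in\Hom(U,A)$; since $G$ is abelian, $G\times G$-conjugacy is trivial on subgroups but still acts on the homomorphism part. The ideal $I_G$ is spanned by those basis elements factoring through a proper subquotient, and from the decomposition in Theorem \ref{thm:decomposition} the biset $\big(\frac{G\times H}{U,\phi}\big)$ factors through $P/K$, so the only basis elements surviving in $\bar E_G$ are those with $P=Q=G$ and $K=L=1$, i.e. exactly the graphs of automorphisms of $G$ equipped with a homomorphism to $A$. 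Using that $A$ is splitting for $G$ to extend homomorphisms (as in the statement preceding Theorem \ref{thm:decomposition}), these are in bijection with pairs $(\phi,\lambda)$, $\phi\in G^*$, $\lambda\in\Aut(G)$ — but one must quotient by $\Inn(G)$ because conjugating $U$ by $(x,1)$ identifies the class of $\tilde\lambda$ with that of $c_x\tilde\lambda$; this yields exactly $G^*\rtimes\Out(G)$-many basis elements, and I would check carefully that no further linear relations among them hold in $\bar E_G$ (i.e. that these classes remain linearly independent modulo $I_G$), which is the one genuinely delicate point.

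Finally I would verify multiplicativity. Using the Mackey product formula \eqref{product:transitives} for two graphs $U_\lambda=\{(g,\tilde\lambda g)\}$ and $U_\mu=\{(g,\tilde\mu g)\}$: the double coset space $p_2(U_\lambda)\backslash G/p_1(U_\mu)=G\backslash G/G$ is a single point, the compatibility condition on the homomorphism restrictions is vacuous since $K=L=1$, and $U_\lambda\ast U_\mu=\{(g,k):\tilde\lambda g = ? \}$ works out to the graph of $\tilde\lambda\tilde\mu$ (or the appropriate composite, depending on the chosen convention for $\mathrm{c}$), with the twisting homomorphism composing to $\phi\ast\psi$ evaluating to $\phi(g)\cdot\psi(\tilde\lambda^{-1} g)$ or similar — matching exactly the product $(\phi,\lambda)(\psi,\mu)=(\phi + \lambda\cdot\psi,\ \lambda\mu)$ in the semidirect product, once one matches the action convention $(\lambda\cdot\psi)(g)=\psi(\lambda g)$ given before the theorem. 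Since everything factors through $P=Q=G$ at each stage, no error terms land in $I_G$, so the product formula in $\bar E_G$ is clean. The main obstacle I anticipate is bookkeeping: getting the left/right conventions for $\mathrm{c}^\lambda$, the direction of $\eta$, and the $\Out(G)$-action on $G^*$ all consistent so that the multiplication genuinely matches $R[G^*\rtimes\Out(G)]$ rather than its opposite; the linear-independence-modulo-$I_G$ claim is the only conceptually substantive step and would use that distinct automorphism graphs give non-isomorphic transitive bisets not factoring through any proper subgroup.
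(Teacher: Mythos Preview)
Your proposal is correct and follows essentially the same route as the paper's own proof: use the decomposition theorem to see that only the bisets with $P=Q=G$, $K=L=1$ (i.e.\ graphs of automorphisms together with a twist) survive in $\bar E_G$, match these with $G^*\times\Out(G)$, and verify multiplicativity via the Mackey formula. The linear-independence step you flag as delicate is in fact immediate---since $G$ is abelian one has $\Inn(G)=1$ and hence $\Out(G)=\Aut(G)$, and every element of $I_G$ is an $R$-linear combination of transitive bisets whose Goursat section $P/K$ is a subquotient of a group of order strictly less than $|G|$, so the automorphism-graph bisets simply cannot appear in $I_G$.
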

\begin{proof}
Let $X = \big(\frac{G\times G}{U,\phi}\big)$ be a transitive $A$-fibered $(G,G)$-biset. With the notation of Theorem \ref{thm:decomposition}, we can write
\[
\Big(\frac{G\times H}{U,\phi}\Big) \cong \Ind_{P}^G \tw_{P}^{\tilde \phi_1}\Inf_{P/K}^{P}
\mbox{\rm c}_{P/K,Q/L}^\eta\Def^{Q}_{Q/L}\tw^{\tilde \phi_2}_{Q}\Res^H_{Q}.
\]
By this isomorphism, it is clear that $X$ is in the ideal $I_G$ unless $P = Q = G$ and $K = L = 1$. Therefore the quotient $\bar E_G$ can be identified with the 
submodule of $E_G$ generated by all $A$-fibered $(G,G)$-bisets $X = \big(\frac{G\times G}{U,\phi}\big)$ where
\begin{align*}
P = Q = G, \quad K=L=1,\quad U = \{ (g,\lambda(g))\in G\times G\vert \lambda\in\Aut(G)\}.
\end{align*}
Thus by Theorem \ref{thm:decomposition}, we get
\begin{align*}
\Big(\frac{G\times G}{U,\phi}\Big) \cong \tw_G^{\tilde\phi_1}\otimes_{AG} \mbox{\rm c}_{G,G}^\lambda\otimes\tw_G^{\tilde\phi_2}
\end{align*}
where $\tilde\phi = \tilde\phi_1\times\tilde\phi_2$ is an extension of $\phi$ to $G\times G$ and $\lambda\in \Aut(G)$.
It follows from the Mackey product formula that if $g\in G$ and $\mbox{\rm c}^g_{G,G}$ denotes the inner automorphism of $G$ induced by conjugation with 
$g$, then 
\begin{align*}
\mbox{\rm c}^g_{G,G}\otimes_{AG}\Big(\frac{G\times G}{U,\phi}\Big) \cong \Big(\frac{G\times G}{U,\phi}\Big)\otimes_{AG}\mbox{\rm c}^g_{G,G} \cong \Big(\frac{G\times G}{U,\phi}\Big).
\end{align*}
Hence, up to isomorphism, the automorphism $\lambda: G\to G$ can be taken as an outer automorphism. Furthermore, we have
\[
 \mbox{\rm c}_{G,G}^\lambda\otimes_{AG}\tw_G^{\phi}\cong \tw_G^{\phi\circ \lambda}\otimes_{AG} \mbox{\rm c}_{G,G}^\lambda
\]
for any $\phi\in G^*$ and $\lambda\in \out(G)$.
Thus the algebra $\bar E_G$ is generated by all $A$-fibered $(G,G)$-bisets of the form $\tw_G^{\phi}
\otimes_{AG} \mbox{\rm c}_{G,G}^\lambda$ where $\phi\in G^*$ and $\lambda\in \Out(G)$. For simplicity, we put
\[
[\phi,\lambda]_G := \tw_G^{\phi}\otimes_{AG} \mbox{\rm c}_{G,G}^\lambda.
\]
Now we define 
\[
\alpha : \bar E_G \to R[G^*\rtimes \out(G)]
\]
by $\alpha([\phi, \lambda]_G) = (\phi, \lambda)$. Clearly the linear extension of $\alpha$ is a well-defined isomorphism of abelian groups. We only show that it induces an algebra map. Given $[\phi, \lambda]_G, [\phi', \lambda']_G\in \bar E_G$, we have
\[
\alpha([\phi, \lambda]_G)\cdot\alpha([\phi', \lambda']_G) = (\phi, \lambda)\cdot (\phi', \lambda') = (\phi\cdot (\lambda\cdot \phi'), \lambda\lambda')
\]
by definition of the semidirect product. On the other hand, by the Mackey product formula, we have
\[
[\phi, \lambda]\cdot[\phi', \lambda'] = [\phi\cdot (\lambda\cdot \phi'), \lambda\lambda']_G,
\]
as required. \qed
\end{proof}

Now let $k$ be a field and $S$ be a simple fibered biset functor over $k$ with minimal group $G$. Clearly the evaluation $V = S(G)$ is a simple 
$\bar E_G$-module.  Hence, by the previous theorem, $V$ is a simple $k[G^*\rtimes \Out(G)]$-module. By the general theory explained in
\cite[Sections 2 and 4]{boucdecomposition}, given
a simple $\bar E_G$-module $V$, there is a unique simple fibered biset functor $S$ such that $S(G) = V$. We denote the simple functor corresponding 
to $V$ by $S_{G,V}$. Note that, in general, a simple fibered biset functor may have two non-isomorphic minimal groups. Thus at this point, we do not
know if the simple functor $S_{G,V}$ has another minimal group when $G$ is abelian. For the aims of this paper, this is not a problem. We remark that 
the exact situation can be determined using several results from \cite{boltjecoskun}.
\section{Fibered Burnside functor}

 In this section, we recall the ring structure on the fibered Burnside group from \cite{barker}, and introduce a natural fibered biset functor structure on it.
 We also determine the actions of basic fibered bisets on primitive idempotents of the fibered Burnside ring.
 \subsection{The ring structure}
 Let $G$ be a finite group and $A$ be an abelian group. The $A$-fibered Burnside group $B^A(G)$ can be identified with the free abelian group
 \[
 B^A(G) = \bigoplus_{[U,\phi]_G\in \mathcal M_G(A)/G}\mathbb Z \cdot[U,\phi]_G
 \] 
 on the set of $G$-conjugacy classes of pairs  $(U,\phi)\in \mathcal M_G(A)$.
Following \cite{dress}, we make $B^A(G)$ a ring via the linear extension of the following product. For $A$-fibered $G$-sets $X$ and $Y$, we define $X\cdot Y$ to be the union of $A$-orbits of $X\times Y$ with respect to the $A$-action
\[
a\cdot (x,y)=(a\cdot x,a^{-1}\cdot y)
\]
for $a\in A$ and $(x,y)\in X\times Y$. We denote the $A$-orbit containing the pair $(x, y)$ by $(x,_A y)$ and make the set $X\cdot Y$ an $A$-fibered $G$-set 
via $(g,a)\cdot(x,_A y) = (g\cdot a\cdot x,_A g\cdot y)$ for $(g,a)\in G\times A$ and $(x,_A y)\in X\cdot Y$. By \cite[Remark 2.3]{barker} and \cite[5.3]{boltje}, for $[U,\phi]_G$ and $[V,\psi]_G$ in $B^A(G)$, we have
\begin{equation}\label{eqn:productInB}
[U,\phi]_G\cdot [V,\psi]_G = \sum_{t\in [U\backslash G/V]} [U\cap {}^tV, \phi \cdot {}^t\psi]_G.
\end{equation}

The ring $B^A(G)$ is commutative and unital with unit $[G,1]_G$ and it extends the (ordinary) Burnside ring $B(G)$. Here we 
identify $B(G)$ with the subring of $B^A(G)$ generated by all the elements $[H,1]_G$ as $H$ runs over all subgroups of 
$G$. Moreover, following \cite[Remark 2.5.7]{bouc}, we can identify $B^A(G)$ with a subring of the $A$-fibered double Burnside ring 
$B(G\times G, A)$ as follows. Note that the ring structure on $B(G\times G, A)$ comes from the composition product of $A$-fibered $(G,G)$-bisets.

For an $A$-fibered $G$-set $X$, we define $A$-fibered $(G,G)$-biset $\Delta(X):=G\times X$ where the $A$-action is given by 
\[
a\cdot (g,x) = (g,a\cdot x)
\]
for any $a\in A$ and $(g,x)\in G\times X$ and the $(G,G)$-action is given by 
\[
g_1\cdot (g,x)\cdot g_2 = (g_1\cdot g\cdot g_2, g_2^{-1}\cdot x)
\]
for any $g,g_1,g_2\in G$ and $x\in X$. Note that for a transitive $A$-fibered $G$-set $[U,\phi]_G$, the above definition becomes
\[
\Delta([U,\phi])=\Big[ \frac{G\times G}{\Delta(U),\Delta(\phi)} \Big].
\]
Here, for any pair $(U,\phi)\in \mathcal M_G(A)$, the diagonal inclusion $(\Delta(U),\Delta(\phi))$ of $(U,\phi)$ in 
$\mathcal M_{G\times G}(A)$ is the pair consisting of the diagonal inclusion of the subgroup $U$ in $G\times G$ and
the diagonal homomorphism $\Delta(\phi)$ given by $\Delta(\phi)(u,u) = \phi(u)$. Extending this map linearly to the $A$-fibered Burnside group $B^A(G)$, we get a group homomorphism
\[
\Delta :  B^A(G) \longrightarrow B^A(G\times G).
\]
Note that the map $\Delta$ extends the 
map
\[
\Delta : B(G) \to B(G,G)
\]
defined in \cite[Lemma 2.5.8]{bouc}. Here $B(G)$ (resp. $B(G,G)$) denotes the Burnside ring (resp. the 
double Burnside ring) of the group $G$. Moreover we have the following result.

\begin{prop}
The map $\Delta$ is a unital ring homomorphism where $B^A(G\times G)$ is considered as a ring with the composition of fibered bisets.  
\end{prop}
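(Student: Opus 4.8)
The plan is to verify directly that $\Delta$ respects products and sends the unit $[G,1]_G$ of $B^A(G)$ to the identity element of the composition ring $B^A(G\times G)$. Since $\Delta$ is already known to be an additive homomorphism, and since both rings are free abelian on the transitive basis elements, it suffices to check the two ring axioms on basis elements. For the unit, I would observe that $\Delta([G,1]_G) = \big[\frac{G\times G}{\Delta(G),\Delta(1)}\big] = \big[\frac{G\times G}{\Delta(G),1}\big]$, which is precisely the identity biset $\id_G$ of the object $G$ in the category $\mathcal C$ (equivalently $\tw_G^1$), so it acts as the identity for the composition product.

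For multiplicativity, I would take two basis elements $[U,\phi]_G$ and $[V,\psi]_G$ in $B^A(G)$ and compute $\Delta([U,\phi]_G \cdot [V,\psi]_G)$ two ways. On the one hand, using the product formula \eqref{eqn:productInB},
\[
\Delta\big([U,\phi]_G \cdot [V,\psi]_G\big) = \sum_{t\in [U\backslash G/V]} \Big[\frac{G\times G}{\Delta(U\cap {}^tV),\,\Delta(\phi\cdot {}^t\psi)}\Big].
\]
On the other hand, I would apply the Mackey product formula \eqref{product:transitives} to the composition $\Delta([U,\phi]_G)\otimes_{AG}\Delta([V,\psi]_G) = \big[\frac{G\times G}{\Delta(U),\Delta(\phi)}\big]\otimes_{AG}\big[\frac{G\times G}{\Delta(V),\Delta(\psi)}\big]$. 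For the diagonal subgroups one has $p_2(\Delta(U)) = U$, $p_1(\Delta(V)) = V$, $k_2(\Delta(U)) = 1$, $k_1(\Delta(V)) = 1$, so $H_x = 1$ for every $x$ and the compatibility condition $\phi_2|_{H_x} = {}^x\psi_1|_{H_x}$ on fiber homomorphisms is vacuous; thus the sum runs over the full double coset set $[U\backslash G/V]$, matching the index set above. It then remains to identify each term $\Delta(U)\ast {}^{(x,1)}\Delta(V)$ with $\Delta(U\cap {}^xV)$ and the corresponding fiber homomorphism $\Delta(\phi)\ast {}^{(x,1)}\Delta(\psi)$ with $\Delta(\phi\cdot {}^x\psi)$; the subgroup identity is exactly the computation underlying Bouc's \cite[Lemma 2.5.8]{bouc} for ordinary bisets, and the fiber part follows by unwinding the definition of the $\ast$-product of homomorphisms: for $(g,g')\in \Delta(U)\ast{}^{(x,1)}\Delta(V)$ one has $g=g'$ with a bridging element $h$ satisfying $(g,h)\in\Delta(U)$ and $(h,x^{-1}gx)\in{}^{(x,1)}\Delta(V)$ — no, more carefully $h=g$ and $x^{-1}gx\in V$ — giving $(\Delta(\phi)\ast{}^{(x,1)}\Delta(\psi))(g,g) = \phi(g)\cdot\psi(x^{-1}gx) = (\phi\cdot {}^x\psi)(g)$.

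The main obstacle I expect is purely bookkeeping: matching the two index sets and, more delicately, checking that the fiber-homomorphism compatibility condition in \eqref{product:transitives} is automatically satisfied for diagonal bisets and reduces exactly to the (trivial) condition that makes the sum in \eqref{eqn:productInB} range over all double cosets. One subtlety worth stating explicitly is that $\Delta$ is \emph{not} compatible with conjugation in a way that could introduce spurious terms — the diagonal nature of the bisets forces $P=Q=G$-type degeneracy only at the level of the $k_i$ being trivial, which is what collapses $H_x$ to the trivial group. Once that is observed, no genuinely hard step remains; the proof is a direct comparison of \eqref{eqn:productInB} against \eqref{product:transitives} specialized to diagonal subgroups, together with the elementary identification $\Delta(\phi\cdot{}^x\psi) = \Delta(\phi)\ast{}^{(x,1)}\Delta(\psi)$.
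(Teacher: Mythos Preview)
Your approach is correct and genuinely different from the paper's. The paper proves multiplicativity by constructing an explicit bijection of $A$-fibered $(G,G)$-sets $\beta:\Delta(X\cdot Y)\to \Delta(X)\otimes_{AG}\Delta(Y)$, $\beta(g,(x,_A y))=(g,x)\otimes(1,y)$, with inverse $\gamma((g,x)\otimes(h,y))=(gh,(h^{-1}x,_A y))$, and checks equivariance element by element. You instead work in the Grothendieck group, comparing the product formula \eqref{eqn:productInB} with the Mackey formula \eqref{product:transitives} specialized to diagonal stabilizing pairs. Your route is arguably cleaner once the two formulas are in hand and avoids any set-level bookkeeping; the paper's route has the virtue of being conceptual (it produces an actual isomorphism of fibered bisets, not just an equality of classes) and of not relying on the Mackey formula at all.

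One point in your write-up needs tightening. You claim that $\Delta(U)\ast{}^{(x,1)}\Delta(V)$ equals $\Delta(U\cap{}^xV)$ and accordingly evaluate the fiber homomorphism at $(g,g)$. In fact, unraveling the definitions gives
\[
\Delta(U)\ast{}^{(x,1)}\Delta(V)=\{(g,x^{-1}gx):g\in U\cap{}^xV\}={}^{(1,x^{-1})}\Delta(U\cap{}^xV),
\]
which is only $G\times G$-conjugate to $\Delta(U\cap{}^xV)$, not literally equal. (Your parenthetical ``no, more carefully\dots'' shows you sensed the discrepancy.) This is harmless for the conclusion, since the basis elements $\big[\frac{G\times G}{W,\chi}\big]$ are indexed by $G\times G$-conjugacy classes of pairs, and the same conjugation carries $\Delta(\phi)\ast{}^{(x,1)}\Delta(\psi)$ to $\Delta(\phi\cdot{}^x\psi)$; but you should say so explicitly rather than asserting equality of subgroups.
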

\begin{proof}
It follows from the definition that the map $\Delta$ is a group homomorphism. We only prove that it is unital and it preserves products, that is, we prove that 
$\Delta$ is unital and for any $A$-fibered $G$-sets $X$ and $Y$, there is an isomorphism 
\[
\Delta(X\cdot Y) \cong \Delta(X)\otimes_{AG} \Delta(Y)
\]
of $A$-fibered $(G, G)$-bisets. Since $\Delta$ extends a similar map from the Burnside ring of $G$ to the double Burnside ring of $G$, it is unital. Next, we let
$X$ and $Y$ be $A$-fibered $G$-sets. Note that, by definition of $\Delta$, the $A\times G\times G$-set $\Delta(X)\times_{AG}\Delta(Y)$ is $A$-free and 
hence we have $\Delta(X)\times_{AG}\Delta(Y) = \Delta(X)\otimes_{AG}\Delta(Y)$ as $A$-fibered $(G,G)$-bisets. Now we define
\[
\beta: \Delta(X\cdot Y) \to \Delta(X)\otimes_{AG} \Delta(Y)
\]
by $\beta((g, (x,_A y))) = (g, x)\otimes (1, y)$. It is straightforward to show that $\beta$ is independent from the choice of the $A$-orbit representative $(x,y)$ of 
$(x,_A y)$. Moreover $\beta$ commutes with the $A\times G\times G$-action. Indeed, if $a\in A$ and $g_1, g_2\in G$, then
\begin{eqnarray*}
\beta((a,g_1)\cdot (g, (x,_A y))\cdot g_2) &=& \beta((g_1gg_2, a\cdot(x,_A y)\cdot g_2)) = \beta((g_1gg_2, (a\cdot g_2^{-1}\cdot x,_A g_2^{-1}\cdot y))) \\
&=& (g_1gg_2, a\cdot g_2^{-1}\cdot x)\otimes (1, g_2^{-1}\cdot y)\\ &=& ((g_1,a)\cdot(g, x)\cdot g_2)\otimes (1, g_2^{-1}\cdot y)
\\ &=& ((a, g_1)\cdot(g, x))\otimes (g_2, g_2^{-1}\cdot y)
\\ &=& ((a, g_1)\cdot(g, x))\otimes ((1, y)\cdot g_2)
\\ &=& (a, g_1)\cdot((g, x)\otimes (1, y))\cdot g_2
\\ &=& (a, g_1)\cdot\beta((g, (x,_{A} y)))\cdot g_2.
\end{eqnarray*}
Next we define the function
\[
\gamma: \Delta(X)\otimes_{AG} \Delta(Y) \to \Delta(X\cdot Y)
\]
given by $\gamma((g, x)\otimes (h, y)) = (gh, (h^{-1}\cdot x,_A y)).$ It is easy to check that $\gamma$ is an inverse to $\beta$ as a morphism of $A$-fibered
$(G,G)$-bisets. \qed
\end{proof}

We note that the above proof is very similar to proof of the first part of Lemma 2.5.8 in \cite{bouc}. With this result, we identify $B^A(G)$ with its image in 
$B(G\times G,A)$ via the map $\Delta$. Under this identification, given any $A$-fibered biset functor $F$, the evaluation $F(G)$ of $F$ at $G$
becomes a $B^A(G)$-module via restriction through $\Delta$. 

\subsection{The functorial structure}
Let $k$ be a field.  We denote by $kB^A$ the $A$-fibered biset functor over $k$ which associates a finite group $G$ to the $A$-fibered
 Burnside group $kB^A(G) = k\otimes B^A(G)$ and any $A$-fibered $(H,G)$-biset $X$ to the map
\[
kB^A(X): kB^A(G)\rightarrow kB^A(H).
\]  
given by left multiplication. To simplify the notation, we denote the map $kB^A(X)$ by $X$. Explicitly, if $(U,\phi)\in \mathcal M_G(A)$, then
\begin{equation}\label{eqn:actionOnB}
\Big[ \frac{H\times G}{V,\psi}\Big]\cdot [U,\phi]_G = \sum_{\substack{x\in [p_2(V)\backslash G/ U]\\ \psi_2\mid_{k_2(V)
\cap U} = {}^x\phi\mid_{k_2(V)\cap U}}} [V\ast {}^xU, \psi\ast {}^x\phi]_H.
\end{equation}

The functor $kB^A$ can be identified with the Yoneda functor Hom$_\mathcal C(- ,1)$ where $1$ denotes the trivial group. In particular, it is 
projective and by Yoneda's Lemma, we have End$_\mathcal F(kB^A) \cong kB^A(1) = k\otimes\mathbb Z \cong k$ as rings. In particular, the endomorphism 
ring of $kB^A$ is local, and hence the functor $kB^A$ is indecomposable. By the classification of simple $A$-fibered biset functors, it is the projective cover of 
the simple $A$-fibered biset functor $S_{1,k}$ where $k$ denotes the one dimensional $k$-vector space. The following proposition is a crucial result that we need to describe the other composition factors of $kB^A$.
\begin{prop}\label{ideal}
The evaluation $kB^A(G)$ of $kB^A$ at $G$ regarded as a module over $kB^A(G)$ via restriction through $\Delta$ is the regular $kB^A(G)$-module.
In particular, for any subfunctor $F$ of $kB^A$ and for any finite group $G$, the evaluation $F(G)$ is an ideal of $kB^A(G)$. 
\end{prop}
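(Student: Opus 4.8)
The plan is to show that, under the identification $kB^A(G)=\Hom_{\mathcal C}(G,1)$, an element $b\in B^A(G)$ acting on $kB^A(G)$ by restriction through $\Delta$ does nothing more than multiply by $b$ inside the ring $B^A(G)$; the ideal statement is then a formal consequence. By definition of $\mathcal C$ and of the functor $kB^A$, the element $b=[V,\psi]_G$ acts as the operator $kB^A(\Delta(b))$, where $\Delta(b)=\big[\frac{G\times G}{\Delta(V),\Delta(\psi)}\big]$, and this operator is given explicitly by the left-multiplication formula (\ref{eqn:actionOnB}) applied to the biset $\Delta(b)$ and a basis element $[U,\phi]_G$ of $kB^A(G)$.

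The key point is that this formula collapses onto the Burnside-ring product (\ref{eqn:productInB}). Indeed, for the diagonal subgroup $\Delta(V)\le G\times G$ one has $p_2(\Delta(V))=V$ and $k_2(\Delta(V))=1$, so the congruence condition $\psi_2\vert_{k_2(\Delta(V))\cap U}={}^x\phi\vert_{k_2(\Delta(V))\cap U}$ appearing in (\ref{eqn:actionOnB}) is automatically satisfied, the sum runs over $x\in[V\backslash G/U]$, and a direct check of the Goursat data gives $\Delta(V)\ast{}^xU=V\cap{}^xU$ together with $\Delta(\psi)\ast{}^x\phi=\psi\cdot{}^x\phi$ on this subgroup. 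Hence $\Delta(b)$ acts on $[U,\phi]_G$ as $\sum_{x\in[V\backslash G/U]}[V\cap{}^xU,\ \psi\cdot{}^x\phi]_G$, which is exactly $[V,\psi]_G\cdot[U,\phi]_G$ by (\ref{eqn:productInB}). Extending $k$-linearly over all transitive $A$-fibered $G$-sets, the $B^A(G)$-module $kB^A(G)$ obtained via $\Delta$ is the regular module. Conceptually this is the natural isomorphism $\Delta(X)\otimes_{AG}Y\cong X\cdot Y$ of $A$-fibered $(G,1)$-bisets for $A$-fibered $G$-sets $X,Y$; one could equally prove it by writing down explicit mutually inverse $A\times G$-equivariant maps, say $(g,x)\otimes y\mapsto(g\cdot x,_A g\cdot y)$ with inverse $(x,_A y)\mapsto(1,x)\otimes y$, in the same spirit as the maps $\beta,\gamma$ in the proof of the preceding proposition.

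For the last assertion, let $F$ be a subfunctor of $kB^A$ and $G$ a finite group. Since $F$ is stable under the morphisms of $\mathcal C$, the operator $\Delta(b)\in\Hom_{\mathcal C}(G,G)$ carries $F(G)$ into $F(G)$ for every $b\in B^A(G)$; by the previous paragraph this says precisely that $F(G)$ is stable under multiplication by every element of $B^A(G)$, hence, since $kB^A(G)$ is spanned over $k$ by $B^A(G)$, by every element of $kB^A(G)$. Thus $F(G)$ is an ideal of $kB^A(G)$. The only real work here is bookkeeping rather than anything conceptual: one must keep straight the various $A$-actions in play (the $A$-action on $\Delta(X)$, the one used to form the amalgamated product $\otimes_{AG}$, and the one used to form the Burnside product $X\cdot Y$) together with the left/right $G$-module conventions implicit in the identification $kB^A(G)=\Hom_{\mathcal C}(G,1)$; once the conventions of Section \ref{prel} and of the present section are fixed, the coincidence of (\ref{eqn:actionOnB}) and (\ref{eqn:productInB}) on $\Delta$-type bisets is a short computation.
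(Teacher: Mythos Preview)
Your proof is correct and essentially follows the paper's approach. The paper reduces the first assertion to the identity $\Delta(X)\otimes_{AG}Y\cong X\cdot Y$ and verifies it by the explicit mutually inverse maps $(g,x)\otimes y\mapsto (g\cdot x,_A g\cdot y)$ and $(x,_A y)\mapsto (1,x)\otimes y$, exactly the maps you record at the end; your primary argument simply checks the same identity at the level of transitive objects by comparing the Mackey formula (\ref{eqn:actionOnB}) for $\Delta([V,\psi])$ with the product formula (\ref{eqn:productInB}), which is an equally valid and slightly more computational route to the same conclusion.
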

\begin{proof}
To prove the first assertion, it suffices to prove that there is an isomorphism of $A$-fibered $G$-sets
$$\Delta(X)\otimes_{AG} Y \cong X\cdot Y$$ 
for any $A$-fibered $G$-sets $X,Y\in B^A(G)$. Following the proof of Lemma 2.5.8 of \cite{bouc}, we define
\[
\alpha: \Delta(X)\otimes_{AG} Y \to X\cdot Y
\]
by $\alpha((g,x)\otimes y) = (g\cdot x,_A g\cdot y)$. This is well-defined with inverse 
\[
\beta: X\cdot Y\to \Delta(X)\otimes_{AG} Y
\]
given by $\beta((x,_A y)) = (1,x)\otimes y$. We leave the details to reader and note that the proofs are almost identical to those in \cite{bouc}, one only needs
to check the $A$-action. Now the second part of the statement follows from the first part since the evaluation $F(G)$ becomes a submodule of the regular 
$kB^A(G)$-module. \qed
\end{proof}
\begin{rem}
The second part of the above result holds for an arbitrary Green biset functor, that is, given a Green biset functor $A$, as in Section 8.5 of \cite{bouc}, an 
$A$-submodule $F$ of $A$ and a finite group $G$, then $F(G)$ is an ideal of $A(G)$ with the ring structure on $A(G)$ induced from the Green biset functor 
structure.
\end{rem}

\subsection{Actions on idempotents}

With Proposition \ref{ideal}, we see that there is a connection between subfunctors of the fibered Burnside functor $kB^{A}$ and the 
structure of the Burnside ring $kB^A(G)$. To make this relation explicit, we need to determine the action of basic $A$-fibered 
biset operations on primitive idempotents of the ring $kB^A(G)$. There are two formulas for primitive idempotents of $kB^A(G)$, see \cite{barker}
and \cite{boltje}. In this paper, we use Barker's formula.

Let $A$ be a cyclic $p$-group and suppose $k$ is a sufficiently large field of characteristic $0$. Also let $O(G) = O_A(G)$ 
denote the intersection of kernels of all homomorphisms $G\rightarrow A$. In this case, the group $G^*$ is isomorphic with the dual group Hom$(G/O(G), A)$ of the 
group $G/O(G)$. We denote by $\mathcal E_G(A)$ the set of all pairs $(H,h)$ where $H\leq G$ and $h$ runs over a 
complete set of left coset representatives of $O(H)$ in $H$. The set $\mathcal E_G(A)$ is a $G$-set via conjugation, and we 
have $|\mathcal{M}_{G}(A)|=|\mathcal E_G(A)|$ by \cite[Lemma 3.1]{barker}. It turns out that there is a bijective correspondence between the set of primitive idempotents
of $kB^A(G)$ and the set $\mathcal E_G(A)$. Writing $e_{H,h}^G$ for the primitive idempotent corresponding to the pair $(H,h)$, 
\cite[Theorem 5.2]{barker} gives
\begin{equation}\label{barkerFormula}
e^{G}_{H,h} = \frac{1}{\lvert N_{G}(H,h) \rvert}\sum_{(V, \nu) \in \mathcal{M}_{G}(A)/G} \lvert V \rvert 
\mu _{G}(V,\nu; H,h)[V, \nu]_{G}
\end{equation}  
where $N_G(H,h)$ denotes the stabilizer in $G$ of the pair $(H,h)$ under the above action of $G$ and where
\begin{align*}
\mu_{G}(V,\nu;H,h)=\sum_{(V',\nu^\prime)\in [V,\nu]_{G}} \nu'^{ -1}(V^\prime\cap hO(H))\mu(V^\prime,H)/\left|V^\prime
\right|
\end{align*}
is the monomial Möbius function and the above sum is over all pairs $(V', \nu')$ $G$-conjugate to $(V, \nu)$.

The idempotents $e_{H, h}^G$ can also be characterized using the algebra maps $kB^A(G)\to k$, called \textit{species}, introduced by Dress \cite{dress}. The 
set of all species is known to be a basis for the dual space of the fibered Burnside ring, see \cite{dress} or \cite[Lemma 5.1]{barker}.
For any $(H,h)\in \mathcal E_G(A)$, we denote the species associated to the $G$-conjugacy class of $(H,h)$ by
\begin{align*}
s^{G}_{H,h}: kB^{A}(G)\rightarrow k
\end{align*}
which is given, for an $A$-fibered $G$-set $X$, by
\begin{align*}
s^{G}_{H,h}[X]= \sum_{Ax}\phi_{x}(h).
\end{align*}
Here the index $Ax$ runs over the fibers of $X$ that are stabilized by $H$ and given such an orbit $Ax$, the $H$-action on $Ax$ induces a group 
homomorphism $\phi_x: H\to A$ given by $\phi_x(h) = a$ provided that $h\cdot x = a\cdot x$ for some (unique) $a\in A$. Now by the duality, the idempotent $e^{G}_{H,h} \in kB^A(G)$ is the 
unique element such that
\begin{align*}
s^{G}_{K,k}(e^{G}_{H,h})= \begin{cases} 
      1 &  (H,h)=_{G}(K,k)\\
      0 & \mbox{\rm otherwise.}
   \end{cases}
 \end{align*}
In particular, $e^{G}_{H,h}$ is the only non-zero idempotent of $kB^A(G)$ such that $X\cdot e^{G}_{H,h}=s^{G}_{H,h}(X)e^{G}_{H,h}$ for any $X\in 
kB^A(G)$. Thus, for any $X\in kB^A(G)$, we have the coordinate decomposition
\begin{align*}
X= \sum_{(H,h) \in \mathcal E_G(A)/G}s^{G}_{H,h}(X)e^{G}_{H,h}.
\end{align*}

\begin{rem}
Let $G$ be a $p$-group. The above idempotent formula still holds if we replace $k$ with a sufficiently large field of characteristic $q\neq p$ since all the denominators of the formula are $p$-powers and hence invertible in $k$. Thus when we restrict to the fibered $p$-biset
functor of fibered Burnside ring over such fields, the above formula will remain valid.
\end{rem}

Our next aim is to determine the actions of basic fibered bisets on primitive idempotents. Two of them are described by Barker in \cite[Proposition 5.4 and Proposition 5.5]{barker}. We recall
the result.
\begin{prop}[Barker]
Let $K\le G$ be finite groups. Then
\begin{enumerate}
\item[(i)] For $(H,h)\in \mathcal E_G(A)$, we have
$$\Res^{G}_{K} e^{G}_{H,h}=\sum_{(J,j)} e^{K}_{J,j}$$
where the pairs $(J,j)$ run over representatives of the $K$-classes of the pairs which are $G$-conjugate to $(H,h)$.
\item[(ii)] Let $(H,h)\in\mathcal E_K(A)$. Then 
\begin{align*}
\Ind^{G}_{K} e^{K}_{H,h}=\lvert N_{G}(H,h):N_{K}(H,h)\rvert e^{G}_{H,h}.
\end{align*}
\end{enumerate} 
\end{prop}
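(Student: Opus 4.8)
The plan is to prove both statements by testing against the Dress species $s^G_{L,l}$: by the duality recalled above, an element of $kB^A(G)$ is determined by its species values, and $s^G_{L,l}(e^G_{H,h})$ equals $1$ if $(L,l)=_G(H,h)$ and $0$ otherwise. So for (i) I would fix $(J,j)\in\mathcal{E}_K(A)$ and evaluate $s^K_{J,j}$ on both sides, and for (ii) I would fix $(L,l)\in\mathcal{E}_G(A)$ and evaluate $s^G_{L,l}$ on both sides. The two inputs I need are a compatibility of species with $\Res$ and a Mackey-type formula for the species of an induced fibered set.

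For (i), note first that $\mathcal{E}_K(A)\subseteq\mathcal{E}_G(A)$, since $O_A(J)$ is the intersection of the kernels of all homomorphisms $J\to A$ and hence depends only on $J$. Since $\Res^G_K X$ is just $X$ regarded as an $A$-fibered $K$-set, its fibers and their stabilizers in a subgroup $J\le K$ coincide with those of $X$; comparing the orbit-sum formulas for $s^G_{J,j}$ and $s^K_{J,j}$ therefore gives $s^K_{J,j}(\Res^G_K X)=s^G_{J,j}(X)$ for every $X\in kB^A(G)$. Taking $X=e^G_{H,h}$, the left-hand side equals $1$ exactly when $(J,j)=_G(H,h)$. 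On the other side, $s^K_{J,j}\big(\sum_{(J',j')}e^K_{J',j'}\big)$ counts the chosen $K$-class representatives that are $K$-conjugate to $(J,j)$; since these representatives are exactly one per $K$-class inside the $G$-class of $(H,h)$, this count is also $1$ precisely when $(J,j)=_G(H,h)$, and $0$ otherwise. As the species form a basis for the dual of $kB^A(K)$, (i) follows.

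For (ii) the heart of the matter is the identity
\[
s^G_{L,l}(\Ind^G_K Y)\;=\;\sum_{\substack{g\in[G/K]\\ {}^{g^{-1}}L\le K}} s^K_{{}^{g^{-1}}L,\,{}^{g^{-1}}l}(Y),\qquad Y\in kB^A(K).
\]
To obtain it I would write $\Ind^G_K Y={}_GG_K\otimes_{AK}Y$ explicitly: its $A$-fibers are indexed by $G\times_K(\text{fibers of }Y)$, and the fiber $g\otimes Ay$ is stabilized by $L$ exactly when ${}^{g^{-1}}L\le K$ and ${}^{g^{-1}}L$ stabilizes the fiber $Ay$ of $Y$, in which case $\phi_{g\otimes y}(l)=\phi_y({}^{g^{-1}}l)$; summing $\phi$-values over the fibers, using a transversal of $G/K$ to name each fiber once, yields the displayed formula. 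Now specialize $Y=e^K_{H,h}$: each summand equals $1$ if $({}^{g^{-1}}L,{}^{g^{-1}}l)=_K(H,h)$ and $0$ otherwise, so $s^G_{L,l}(\Ind^G_K e^K_{H,h})$ is the number of cosets $gK$ admitting some $k\in K$ with $kg^{-1}\in T$, where $T=\{x\in G:{}^xL=H,\ {}^xl=h\}$. If $(L,l)\neq_G(H,h)$ then $T=\emptyset$ and this number is $0$. Otherwise $T$ is simultaneously a left coset of $N_G(L,l)$ and a right coset of $N_G(H,h)$, the admissible $g$ are exactly those filling $T^{-1}K$, and the count is $|T^{-1}K|/|K|=|N_G(H,h)K|/|K|=|N_G(H,h):N_K(H,h)|$, using $N_G(H,h)\cap K=N_K(H,h)$. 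Hence $s^G_{L,l}(\Ind^G_K e^K_{H,h})$ vanishes unless $(L,l)=_G(H,h)$, where it equals $|N_G(H,h):N_K(H,h)|$, and the coordinate decomposition $\Ind^G_K e^K_{H,h}=\sum_{(L,l)}s^G_{L,l}(\Ind^G_K e^K_{H,h})\,e^G_{L,l}$ gives (ii).

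Everything about restriction is routine; the real work is the induction half, namely unwinding the orbit set of $\Ind^G_K Y$ (the identification $g\otimes Ay=gk\otimes A(k^{-1}y)$ and the ensuing conjugations ${}^{g^{-1}}L$, ${}^{g^{-1}}l$) to get the Mackey formula for species, and then the elementary but fiddly coset count that turns it into the index $|N_G(H,h):N_K(H,h)|$ — this is the step I expect to be the main obstacle. An alternative would be to expand $e^G_{H,h}$ and $e^K_{H,h}$ in the transitive basis and apply \eqref{eqn:actionOnB} together with Barker's formula \eqref{barkerFormula}, but that drags the monomial Möbius function through the whole computation, so I would prefer the species route.
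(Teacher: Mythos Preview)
The paper does not actually prove this proposition: it is quoted from Barker \cite[Propositions~5.4 and~5.5]{barker} without argument. Your species approach is correct and is in fact the natural one (and the one Barker uses): the compatibility $s^K_{J,j}\circ\Res^G_K=s^G_{J,j}$ immediately gives (i), and the Mackey-type identity
\[
s^G_{L,l}\bigl(\Ind^G_K Y\bigr)=\sum_{\substack{g\in[G/K]\\ {}^{g^{-1}}\!L\le K}} s^K_{{}^{g^{-1}}\!L,\;{}^{g^{-1}}l}(Y)
\]
together with your coset count gives (ii). One small point of precision: in the definition of the transporter $T$ the condition ${}^xl=h$ should be read modulo $O(H)$, since the pairs $(H,h)$ are only well-defined up to the coset $hO(H)$; but $T$ remains a single right coset of $N_G(H,h)$ and the computation $|T^{-1}K|/|K|=|N_G(H,h):N_K(H,h)|$ is unaffected.
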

 
 The following Frobenius type formula is used in the next theorem to describe actions of inflation and deflation. 

\begin{lemma} \mbox{\rm{(Frobenius Relation)}}
Let $N \unlhd G$ be a normal subgroup of $G$. Then, for any $A$-fibered $G$-set $X$ and any $A$-fibered $G/N$-set $Y$, we have an isomorphism of 
$A$-fibered $G/N$-sets
\begin{align*}
Y\cdot \Def^{G}_{G/N}(X)\cong\Def^{G}_{G/N}\left(\Inf^{G}_{G/N}(Y)\cdot X\right).
\end{align*}
\end{lemma}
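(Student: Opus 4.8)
The plan is to exhibit an explicit bijection between the two $A$-fibered $G/N$-sets and check that it is $A\times G/N$-equivariant, exactly mirroring the proof of the classical Frobenius relation for ordinary $G$-sets (as in Bouc's book). First I would unwind the definitions of both sides. On the left, $Y\cdot\Def^G_{G/N}(X)$ is the set of $A$-orbits in $Y\times\Def^G_{G/N}(X)$ under the action $a\cdot(y,\bar x)=(a\cdot y,a^{-1}\cdot\bar x)$, where $\Def^G_{G/N}(X)$ is the set of $N$-orbits of $X$ (with the $A$-action induced from $X$, which remains free since $N$ acts $A$-linearly and $A$ acts freely). On the right, $\Inf^G_{G/N}(Y)$ is $Y$ viewed as a $G$-set via the quotient map $G\to G/N$ (with the same $A$-action), so $\Inf^G_{G/N}(Y)\cdot X$ is the set of $A$-orbits in $Y\times X$ under $a\cdot(y,x)=(a\cdot y,a^{-1}\cdot x)$, and then we take $N$-orbits of that.

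Next I would define the candidate map. Write $(y,_A x)$ for the $A$-orbit of $(y,x)$ in $\Inf^G_{G/N}(Y)\cdot X$ and $\overline{(y,_A x)}$ for its $N$-orbit; write $\bar x$ for the $N$-orbit of $x$ in $\Def^G_{G/N}(X)$ and $(y,_A\bar x)$ for its $A$-orbit. The natural map
\[
\Theta:\Def^G_{G/N}\bigl(\Inf^G_{G/N}(Y)\cdot X\bigr)\longrightarrow Y\cdot\Def^G_{G/N}(X),\qquad \Theta\bigl(\overline{(y,_A x)}\bigr)=(y,_A\bar x),
\]
is the obvious one; its inverse sends $(y,_A\bar x)$ to $\overline{(y,_A x)}$ for any lift $x$ of $\bar x$. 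The bulk of the (routine) work is the well-definedness checks: that $\Theta$ does not depend on the choice of $A$-orbit representative $(y,x)$, nor on the choice of $N$-orbit representative, using that $N$ acts trivially on $\Inf^G_{G/N}(Y)$ in the $Y$-coordinate; and symmetrically for the inverse, using that $N$ acts $A$-linearly on $X$ so that replacing $x$ by $nx$ changes $(y,_A x)$ only within its $N$-orbit. Equivariance for the $G/N$-action is where one must be slightly careful with conventions: the $G$-action on $\Inf^G_{G/N}(Y)\cdot X$ descends to the displayed $G/N$-action on its deflation, and it is transported by $\Theta$ to the $G/N$-action $\bar g\cdot(y,_A\bar x)=(\bar g\cdot y,_A\bar g\cdot\bar x)$ on $Y\cdot\Def^G_{G/N}(X)$; compatibility with the $A$-action is immediate from the formulas $a\cdot(y,_A\bar x)=(a\cdot y,_A\bar x)=(y,_A a\cdot\bar x)$.

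I do not expect a genuine obstacle here: the statement is a direct fibered analogue of a standard identity, and the only thing that has no counterpart in the non-fibered setting is bookkeeping of the free $A$-action, which interacts harmlessly with both the $Y\cdot(-)$ product and the deflation since $A$ is central and $N$ acts $A$-linearly. The one point worth flagging explicitly in the write-up is that $\Def^G_{G/N}$ of an $A$-fibered $G$-set is again $A$-free (so the products on both sides are formed in the right categories), and that the $G/N$-set structures on the two sides match under $\Theta$; beyond that the verification is the same diagram chase as in the classical case, so I would present $\Theta$ and its inverse, note the two well-definedness points, and leave the remaining identities to the reader as the paper does for the analogous lemmas above.
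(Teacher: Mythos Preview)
Your explicit-bijection approach differs from the paper's, which instead leverages the embedding $\Delta$ set up just before: the proof of Proposition~\ref{ideal} shows that multiplication by an $A$-fibered set is realized by tensoring with the corresponding diagonal biset. Using this, both sides of the Frobenius relation become iterated tensor products of fibered bisets applied to $X$, and by associativity the claim reduces to a single isomorphism of $A$-fibered $(G/N,G)$-bisets,
\[
\Delta(Y)\otimes_{A(G/N)}G/N\;\cong\;G/N\otimes_{AG}\Delta\bigl(G/N\otimes_{A(G/N)}Y\bigr),
\]
which is the direct fibered analogue of Proposition~2.5.10(1) in Bouc's book. Your route is more elementary and self-contained; the paper's route reuses the $\Delta$-machinery already in place and isolates the content as a biset identity independent of $X$.

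There is, however, a genuine slip in your outline. You rightly flag that one must check $\Def^G_{G/N}(X)$ lives in the correct category, but the justification ``remains free since $N$ acts $A$-linearly and $A$ acts freely'' is wrong, and in fact the $A$-action on all of $N\backslash X$ need not be free. Take $G=N=C_p$, let $A$ contain a copy of $C_p$, and let $X=[G,\phi]_G$ with $\phi:C_p\hookrightarrow A$ nontrivial; then $X\cong A$ with $G$ acting through $\phi$, the $N$-orbit space is $A/\phi(G)$, and $A$ acts on it with stabilizer $\phi(G)\neq1$. The correct statement is that $\Def^G_{G/N}(X)$ consists only of the $A$-\emph{free} $N$-orbits of $X$, since $\otimes_{AG}$ is by definition the subset of $A$-free orbits of $\times_{AG}$. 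Your map $\Theta$ survives this correction: the $N$-orbit of $(y,_A x)$ in $\Inf(Y)\cdot X$ is $A$-free if and only if the $N$-orbit of $x$ in $X$ is $A$-free, as both conditions unwind to $\phi_x|_{N\cap U_x}=1$ (using that $N$ acts trivially on the $Y$-coordinate after inflation and that $A$ acts freely on $Y$). So replace your description of $\Def^G_{G/N}(X)$, insert this one-line check that $\Theta$ matches the $A$-free loci on the two sides, and the rest of your argument goes through.
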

\begin{proof}
Recall that the deflation map $\Def^{G}_{G/N}$ (resp. the inflation map $\Inf^{G}_{G/N}$) corresponds to composition with the $(G/N, G)$-biset $G/N$
(resp. $(G, G/N)$-biset $G/N$). Thus, given an $A$-fibered $G$-set $X$ and an $A$-fibered $G/N$-set $Y$, we have to prove
\[
Y\cdot (G/N \otimes_{AG} X)\cong G/N\otimes_{AG} \left((G/N\otimes_{A(G/N)} Y)\cdot X\right).
\]
Moreover by the proof of Proposition \ref{ideal}, we can rewrite the above equality as 
\[
\Delta(Y)\otimes_{A(G/N)} (G/N \otimes_{AG} X)\cong G/N\otimes_{AG} \left(\Delta(G/N\otimes_{A(G/N)} Y)\otimes_{AG} X\right).
\]
Hence by the associativity of the product $\otimes_{AG}$, it is sufficient to prove that
\[
\Delta(Y)\otimes_{A(G/N)} G/N\cong G/N\otimes_{AG} \Delta(G/N\otimes_{A(G/N)}Y)
\]
as $A$-fibered $(G/N,G)$-sets. This is a straightforward generalization of part 1 of Proposition 2.5.10 in \cite{bouc}. We leave the details to reader. \qed
\end{proof}

\begin{theorem}\label{deflationconstant}
Let $N \unlhd G$ be a normal subgroup of $G$. 
\begin{enumerate}
\item[(i)] For any $(H/N,hN) \in \mathcal E_G(A)$, we have
\begin{align*}
\Inf^{G}_{G/N}(e^{G/N}_{H/N,hN})=\sum_{(K,k)} e^{G}_{K,k}
\end{align*}
where $(K,k)$ runs over representatives of the $G$-classes of $\mathcal E_G(A)$ such that $(KN,k)$ is $G$-conjugate to $(H,h)$.
\item[(ii)] For any $(H,h)\in \mathcal E_G(A)$, we have
\begin{align*}
\Def^{G}_{G/N}e^{G}_{H,h} = m_{H,h}^G\cdot e^{G/N}_{HN/N,hN} 
\end{align*}
for some constant $m_{H,h}^G$.
\end{enumerate}
\end{theorem}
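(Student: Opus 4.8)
The plan is to use the species characterization of idempotents to handle both parts. For part (i), I would compute the species $s^G_{K,k}$ of the inflation $\Inf^G_{G/N}(e^{G/N}_{H/N,hN})$ for an arbitrary pair $(K,k) \in \mathcal{E}_G(A)$, and show it equals $1$ exactly when $(KN,k)$ is $G$-conjugate to $(H,h)$ and $0$ otherwise. Since $\Inf^G_{G/N}$ corresponds to composition with the $(G,G/N)$-biset $G/N$, evaluating a species amounts to tracking which $A$-fibers of the inflated set are stabilized by $K$ and with what homomorphism; a fiber of $\Inf^G_{G/N}(X)$ is stabilized by $K$ with homomorphism $\phi$ precisely when the corresponding fiber of $X$ is stabilized by $KN/N$ with the homomorphism $\bar\phi$ factoring $\phi$ through $K/(K\cap N)$. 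This forces $\phi$ to be trivial on $K \cap N$, hence $k$ can be read modulo $N$, and one obtains $s^G_{K,k}(\Inf^G_{G/N} e^{G/N}_{H/N,hN}) = s^{G/N}_{KN/N, kN}(e^{G/N}_{H/N,hN})$, which is $1$ iff $(KN/N, kN) =_{G/N} (H/N, hN)$, i.e. iff $(KN,k) =_G (H,h)$. Then the coordinate decomposition $\Inf^G_{G/N}(e^{G/N}_{H/N,hN}) = \sum_{(K,k)} s^G_{K,k}(\cdots) e^G_{K,k}$ gives the stated sum.

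For part (ii), the cleanest route is to use the Frobenius relation from the preceding lemma together with part (i), rather than a direct species computation (a direct computation is awkward because $\Def$ does not have an easy description on species). I would argue as follows: write $\Def^G_{G/N} e^G_{H,h} = \sum_{(K/N,kN)} c_{K,k}\, e^{G/N}_{K/N,kN}$ in the idempotent basis of $kB^A(G/N)$, with scalars $c_{K,k} \in k$. Applying $\Inf^G_{G/N}$ and using that $\Inf^G_{G/N}\Def^G_{G/N}$ sends $e^G_{H,h}$ into $kB^A(G)$, one relates the $c_{K,k}$ to species of $\Inf^G_{G/N}\Def^G_{G/N}(e^G_{H,h})$; but the decisive point is to multiply the Frobenius relation $Y \cdot \Def^G_{G/N}(e^G_{H,h}) = \Def^G_{G/N}(\Inf^G_{G/N}(Y) \cdot e^G_{H,h})$ by a suitable idempotent $Y = e^{G/N}_{K/N,kN}$ and evaluate species. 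Taking $Y = e^{G/N}_{K/N,kN}$, the right side becomes $\Def^G_{G/N}(\Inf^G_{G/N}(e^{G/N}_{K/N,kN}) \cdot e^G_{H,h})$, and by part (i), $\Inf^G_{G/N}(e^{G/N}_{K/N,kN})$ is a sum of primitive idempotents $e^G_{J,j}$ of $kB^A(G)$; since $e^G_{H,h}$ is primitive, the product $\Inf^G_{G/N}(e^{G/N}_{K/N,kN}) \cdot e^G_{H,h}$ is $e^G_{H,h}$ if $(HN,h) =_G (KN,k)$ — equivalently if $(HN/N, hN) =_{G/N} (K/N, kN)$ — and $0$ otherwise. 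Hence $e^{G/N}_{K/N,kN} \cdot \Def^G_{G/N}(e^G_{H,h})$ equals $\Def^G_{G/N}(e^G_{H,h})$ in the first case and $0$ in the second. This shows $\Def^G_{G/N}(e^G_{H,h})$ is supported on the single idempotent $e^{G/N}_{HN/N, hN}$, i.e. $\Def^G_{G/N}(e^G_{H,h}) = m^G_{H,h}\, e^{G/N}_{HN/N,hN}$ for the scalar $m^G_{H,h} := c_{HN/N,hN}$.

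The main obstacle I anticipate is the bookkeeping in part (i): making precise the correspondence between $A$-fibers of $\Inf^G_{G/N}(X) = (G/N) \otimes_{A(G/N)} X$ that are stabilized by a subgroup $K \le G$ and $A$-fibers of $X$ stabilized by the image of $K$ in $G/N$, and checking that the induced homomorphisms $K \to A$ match up correctly via the quotient map. One must be careful that a fiber stabilized by $K$ forces the homomorphism $\phi_x$ to kill $K \cap N$ (because the $G/N$-factor kills $N$), so that the element $h$ representing a coset of $O(K)$ in $K$ only matters modulo $N$; this is exactly what makes $(KN, k)$ rather than $(K,k)$ the relevant invariant. Once this is set up carefully, the species values drop out of the definition $s^G_{K,k}[X] = \sum_{Ax} \phi_x(k)$ and the characterization of idempotents by their species. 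Part (ii) is then essentially formal given part (i) and the Frobenius relation, so no serious difficulty remains there; the scalar $m^G_{H,h}$ is left unevaluated at this stage, as the statement only asserts its existence.
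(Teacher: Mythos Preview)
Your proposal is correct and follows essentially the same route as the paper. Part (i) is handled identically, by showing $s^{G}_{K,k}\circ\Inf^{G}_{G/N}=s^{G/N}_{KN/N,kN}$ via the explicit description of inflated fibers and then reading off the coordinate decomposition. For part (ii) both you and the paper invoke the Frobenius relation; the only cosmetic difference is that the paper multiplies by an arbitrary $S\in kB^{A}(G/N)$ and uses the characterization ``$X\cdot e=s^{G/N}_{HN/N,hN}(X)\,e$ for all $X$ forces $e$ to be a scalar multiple of $e^{G/N}_{HN/N,hN}$'', whereas you test against each primitive idempotent $e^{G/N}_{K/N,kN}$ and appeal to part (i) to see which survive --- the two arguments are interchangeable.
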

\begin{proof} 
First we demonstrate that for any $(K,k)\in \mathcal E_G(A)$ and for any $A$-fibered $G/N$-set $S$, we have 
$s^{G}_{K,k}(\Inf^{G}_{G/N}(S))=s^{G/N}_{KN/N,kN}(S)$. Since the inflation map is a group homomorphism, it suffices to take $S$ 
transitive. For any transitive $A$-fibered $G/N$-set $[V/N, \nu]_{G/N}$, Equation (3) implies
\[
\Inf^{G}_{G/N}([V/N, \nu]_{G/N}) = G/N\otimes_{A(G/N)} [V/N, \nu]_{G/N} = [V, \bar\nu]_G
\]
where $\bar\nu$ denotes the inflation of $\nu$ to $V$. Thus, we have
\begin{align*}
s^{G}_{K,k}(\Inf^{G}_{G/N}([V/N, \nu]_{G/N}))=s^{G}_{K,k}([V, \bar{\nu}]_{G}) = \sum_{Ag} \bar{\nu}_{g}(k)
\end{align*}
where $Ag$ runs over the fibers stabilized by $K$. But these fibers are also stabilized by $KN/N$ and we have 
$\bar{\nu}_{g}(k)=\nu_{g}(kN)$. Hence
\begin{align*}
s^{G}_{K,k}(\Inf^{G}_{G/N}([V/N, \nu]_{G/N}))=s^{G/N}_{KN/N,kN}([V/N, \nu]_{G/N}).
\end{align*}
Therefore, we obtain
\begin{align*}
s^{G}_{K,k}(\Inf^{G}_{G/N}(e^{G/N}_{H/N, hN}))&= s^{G/N}_{KN/N,kN}(e^{G/N}_{H/N, hN})=  \left\{
\begin{array}{ll}
      1 &  ~\text{if}~ (KN/N, kN)=_{G/N} (H/N,hN),\\
      0 &  ~\text{otherwise}\vspace{0.1in} \\
\end{array} 
\right.
\\&= \left\{
\begin{array}{ll}
      1 &  ~\text{if}~  (KN, k)=_{G} (H,h),\\
      0 &  ~\text{otherwise}\vspace{0.1in} \\
\end{array} 
\right. 
\end{align*}
and the first part follows.

For the second part, let $S$ be an arbitrary $A$-fibered $G/N$-set. Then, using the Frobenius relation, we obtain
\begin{align*}
S\cdot \Def^{G}_{G/N} e^{G}_{H,h} & = \Def^{G}_{G/N}(\Inf^{G}_{G/N}(S)\cdot e^{G}_{H,h})  = 
\Def^{G}_{G/N}(s^{G}_{H,h}(\Inf^{G}_{G/N}S)\cdot e^{G}_{H,h}) \\ & = \Def^{G}_{G/N}(s^{G/N}_{HN/N,hN}(S)\cdot e^{G}_{H,h})  
= s^{G/N}_{HN/N,hN}(S)\cdot \Def^{G}_{G/N} (e^{G}_{H,h}).
\end{align*} 
However, $e^{G/N}_{HN/N,hN}$ is the unique element with the above property. Therefore, we conclude that
\begin{align*}
\Def^{G}_{G/N}e^{G}_{H,h} = m\cdot e^{G/N}_{HN/N,hN} 
\end{align*}
for some constant $m$. \qed
\end{proof}

Finally we describe the actions of transport of structure and twist bisets. We skip the straightforward proofs.
\begin{prop}\label{pro:iso-tw}
Let $\lambda:G\rightarrow G^\prime$ be a group isomorphism, let $\phi\in G^*$ and let $(H,h)\in \mathcal E_G(A)$. Then
\begin{enumerate}
\item[\mbox{\rm (i)}] $c_{G,G^\prime}^\lambda e_{H,h}^G = e_{\lambda(H),\lambda(h)}^{G^\prime}$,
\item[\mbox{\rm (ii)}] $\tw_{G}^\phi e_{H,h}^G = \phi(h)\cdot e_{H,h}^{G}$.
\end{enumerate} 
\end{prop}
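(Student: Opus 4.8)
The strategy is to exploit the duality between idempotents and species established above: an element of $kB^A(G^\prime)$ (resp.\ $kB^A(G)$) is completely determined by its values under all species $s^{G^\prime}_{K,k}$ (resp.\ $s^G_{K,k}$), since the species form a basis of the dual space. So for part (i) it suffices to check that $s^{G^\prime}_{K,k}\big(c_{G,G^\prime}^\lambda e_{H,h}^G\big)$ equals $1$ when $(K,k) =_{G^\prime} (\lambda(H),\lambda(h))$ and $0$ otherwise, and for part (ii) that $s^G_{K,k}\big(\tw_G^\phi e_{H,h}^G\big) = \phi(h)$ when $(K,k) =_G (H,h)$ and $0$ otherwise. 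In each case one transfers the computation onto the species: the first step is to establish the adjunction-type identities
\begin{align*}
s^{G^\prime}_{K,k}\big(c_{G,G^\prime}^\lambda X\big) &= s^G_{\lambda^{-1}(K),\lambda^{-1}(k)}(X), &
s^{G}_{K,k}\big(\tw_{G}^\phi X\big) &= \phi(k)\cdot s^G_{K,k}(X)
\end{align*}
for every $A$-fibered $G$-set $X$. Once these are in hand, evaluating at $X = e_{H,h}^G$ and using the defining property $s^G_{J,j}(e^G_{H,h}) = \delta_{[J,j]_G,[H,h]_G}$ gives the claimed formulas immediately, because $(\lambda^{-1}(K),\lambda^{-1}(k)) =_G (H,h)$ if and only if $(K,k) =_{G^\prime} (\lambda(H),\lambda(h))$, and in part (ii) the factor $\phi(k)$ becomes $\phi(h)$ on the support $(K,k) =_G (H,h)$ (noting $\phi(h)$ is well defined on the $G$-class since $G$ acts by conjugation and $\phi \in G^*$ is $G$-invariant up to the choice within $hO(G)$, which does not affect $\phi(h)$ as $\phi$ kills $O(G)$).

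To prove the two species identities one argues directly from the explicit description $s^G_{H,h}[X] = \sum_{Ax}\phi_x(h)$, where the sum runs over the $H$-stabilized fibers of $X$. For transport of structure, $c_{G,G^\prime}^\lambda X = {}_{G^\prime}G_{G^\prime}\otimes_{AG} X$ is, as an $A$-fibered $G^\prime$-set, just the underlying set $X$ with $G^\prime$ acting through $\lambda$; hence a fiber $Ax$ of $c_{G,G^\prime}^\lambda X$ is stabilized by $K \le G^\prime$ exactly when it is stabilized by $\lambda^{-1}(K) \le G$ as a fiber of $X$, and the induced homomorphism $K \to A$ is the original one precomposed with $\lambda$, so that $\phi_{\lambda^{-1}(x)}(\lambda^{-1}(k)) = \phi_x(k)$ term by term. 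For the twist, $\tw_G^\phi X = \big(\frac{G\times G}{\Delta(G),\Delta(\phi)}\big)\otimes_{AG} X$ has the same underlying $A$-fibered $G$-set structure as $X$ except that the $G$-action is modified by the character $\phi$ (this is essentially the definition of the twist biset and the Mackey product formula applied to $\Delta(G) \ast \Delta(G)$); concretely, if $g \cdot x = a \cdot x$ in $X$ then in $\tw_G^\phi X$ one has $g \cdot x = \phi(g) a \cdot x$. Consequently the set of $K$-stabilized fibers is unchanged and each induced character gets multiplied by $\phi|_K$, so that each summand $\phi_x(k)$ becomes $\phi(k)\phi_x(k)$, giving the global factor $\phi(k)$.

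The only mildly delicate point — the one I would single out as the main obstacle — is verifying cleanly that the twist biset acts on an $A$-fibered $G$-set precisely by the character-twisting recipe $g\cdot x \mapsto \phi(g)(g\cdot x)$, rather than by something more complicated; this requires being careful about left-versus-right conventions in $\tw_G^\phi = \big(\frac{G\times G}{\Delta(G),\Delta(\phi)}\big)$ and about where the diagonal homomorphism $\Delta(\phi)(g,g) = \phi(g)$ inserts the scalar. Once the conventions are pinned down this is a one-line check using the Mackey product formula \eqref{product:transitives} together with the identity $U \ast \Delta(G) = U$ already used in the proof of Theorem \ref{thm:decomposition}, and the rest of the argument is the routine bookkeeping indicated above; this is exactly why the authors say they "skip the straightforward proofs."
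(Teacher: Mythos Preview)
The paper omits the proof of this proposition entirely (``We skip the straightforward proofs''), so there is nothing to compare against directly. Your approach via species duality is correct and is precisely the technique the paper itself uses for the inflation formula in Theorem \ref{deflationconstant}(i): reduce to a compatibility identity between species and the biset operation, verify that identity on transitive $A$-fibered $G$-sets, and then plug in $X = e^G_{H,h}$. Your handling of the subtle point in part (ii)---that $\phi(k) = \phi(h)$ whenever $(K,k) =_G (H,h)$ because $\phi$ is conjugation-invariant and kills $O(K)$---is the right justification. Two cosmetic remarks: in your transport-of-structure computation you wrote ${}_{G^\prime}G_{G^\prime}$ where you meant a $(G^\prime,G)$-biset (and note that the paper's own convention for $c^\lambda_{G,G^\prime}$ as ${}_GG_{G^\prime}$ is at odds with how the proposition uses it, so some care with directions is warranted); and the notation $\phi_{\lambda^{-1}(x)}$ is slightly garbled, though your intended meaning---that the induced character on a fiber is the original one precomposed with $\lambda^{-1}$---is clear and correct.
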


\section{Composition factors over $p$-groups}\label{composition}

In this section, we restrict our attention to the category of $A$-fibered $p$-biset functors and determine the 
subfunctors of the fibered Burnside functor $kB^A$ over a field $k$ of characteristic $q\neq p$. Here by an 
$A$-fibered $p$-biset functor we mean a $k$-linear functor $\mathcal C_p \rightarrow {}_k$Mod where 
$\mathcal C_p$ 
is the full subcategory of $\mathcal C$ consisting only of $p$-groups, for a fixed prime $p$.
We also assume that $A$ is a subgroup of the unit group of a field of characteristic zero and that the order of $A$ is divisible by 
$p$. 
Our approach is similar to that of Bouc and Th\'evenaz \cite[Section 8]{boucthevenaz}. 

The precise situation is as follows. We fix a prime $p$ and a positive integer $n$. We denote by 
$\mu_n$ a cyclic group of order $p^n$. For a $p$-group $P$, we let $O_n(P)$ denote the group $O_{\mu_n}(P)$. 
Finally we let $k$ be an algebraically closed field of characteristic $q\neq p$ and fix an embedding $\mu_n\to k^\times$.
 Our aim is to determine subfunctors of the functor $k\otimes B^{\mu_n}$. For simplicity write $kB^n:=k\otimes B^{\mu_n}$. For 
 this aim, we first determine minimal groups for the subfunctors of the fibered Burnside functor. 
 
Interestingly, in all cases, the minimal groups are elementary abelian. As in the case of the ordinary Burnside functor, extra work 
should be done to see which elementary abelian groups appear as a minimal group and it turns out that the possible ranks depends only on whether $q| p-1$.

To begin with, let $F$ be a subfunctor of $kB^n$ and suppose that $G$ is a minimal group of $F$. We know, by Proposition \ref{ideal}, that $F(G)$ is an 
ideal of 
$kB^n(G)$. Therefore, it is generated by a set of primitive idempotents $e^{G}_{H,h}$ of $kB^n(G)$. Let $X$ be a fibered $(L,G)$-biset for some group 
$L$. If $X$ can be factored through a group $K$ with $\left|K\right|<\left|G\right|$, then for any $e^{G}_{H,h}\in F(G)$ we should have $X\cdot 
e^{G}_{H,h}=0$. This implies that to find the minimal groups, we need a deeper understanding of the action on the idempotents of the fibered bisets 
that map to groups of smaller order.    

First, notice that for any proper subgroup $H < G$, the idempotent $e^{G}_{H,h}$ is not contained in $F(G)$. Indeed
$\Res^{G}_{H}e^{G}_{H,h}=e^{H}_{H,h}$ and $F(H) =0$. Therefore, the ideal $F(G)$ must be generated by idempotents of the form $e_{G,g}^G$. 

Next we consider the action of deflation maps on the idempotents of the form 
$e^{G}_{G,g}$. Recall that if $N \unlhd G$ is a normal subgroup of $G$, then 
\begin{equation}\label{defconstant}
\Def^{G}_{G/N}e^{G}_{G,g} = m\cdot e^{G/N}_{G/N,gN}
\end{equation}
for some constant $m$. Since $G$ is minimal, for any non-trivial normal subgroup $N$ of $G$ the constant $m$ should be zero. 
Following Bouc and Thev\'{e}naz \cite{boucthevenaz}, we consider the elementary abelian $p$-groups and non-elementary abelian $p$-groups, separately. Let $\Phi(G)$ denote the Frattini subgroup of $G$ and $\overline G$ denote the quotient $G/\Phi(G)$. 

\begin{lemma}\label{lem:def-p}
For any $p$-group $G$ and $g\in G$, we have
\begin{align*}
\Def^{G}_{\overline G}e^{G}_{G,g} = \frac{\lvert O(G) \rvert}{\lvert N_{G}(G,g) \rvert}\cdot \lvert \overline G\rvert\cdot 
e^{\overline G}_{\overline G,g\Phi (G)}.
\end{align*}
\end{lemma}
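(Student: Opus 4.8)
The plan is to start from Theorem~\ref{deflationconstant}(ii), which already tells us that $\Def^{G}_{\overline G}e^{G}_{G,g}=m\cdot e^{\overline G}_{\overline G,g\Phi(G)}$ for some scalar $m$, so that the whole problem reduces to identifying this constant. I would extract $m$ by applying the species $s^{\overline G}_{\overline G,g\Phi(G)}$ to both sides: the right-hand side becomes $m$, hence
\[
m=s^{\overline G}_{\overline G,g\Phi(G)}\bigl(\Def^{G}_{\overline G}e^{G}_{G,g}\bigr),
\]
and it remains to understand the linear functional $s^{\overline G}_{\overline G,g\Phi(G)}\circ\Def^{G}_{\overline G}$ on $kB^{n}(G)$ and then to insert Barker's formula \eqref{barkerFormula} for $e^{G}_{G,g}$.

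The first step is to record the effect of deflation on a transitive $A$-fibered set. As in the ordinary case (compare the description of $\Def^{G}_{G/N}$ on transitive bisets in \cite{bouc}), and keeping track of the $A$-action, one checks directly from the definition of $\otimes_{AG}$ that for $N\unlhd G$
\[
\Def^{G}_{G/N}\bigl([V,\nu]_{G}\bigr)=
\begin{cases}
[\,VN/N,\ \bar\nu\,]_{G/N} & \text{if } \nu|_{V\cap N}=1,\\[3pt]
0 & \text{otherwise,}
\end{cases}
\]
where $\bar\nu$ is the homomorphism on $VN/N\cong V/(V\cap N)$ induced by $\nu$; the dichotomy is exactly the statement that the $A\times G$-orbit in $(G/N)\times[V,\nu]_{G}$ through the natural basepoint is $A$-free precisely when $\nu$ is trivial on $V\cap N$. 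Specialising to $N=\Phi(G)$, applying $s^{\overline G}_{\overline G,g\Phi(G)}$, and using that $s^{\overline G}_{\overline G,\cdot}$ annihilates $[W,\omega]_{\overline G}$ unless $W=\overline G$ (in which case it returns $\omega$ evaluated at the relevant coset) together with the fact that $V\Phi(G)=G$ forces $V=G$ since $\Phi(G)$ consists of non-generators, I obtain
\[
s^{\overline G}_{\overline G,g\Phi(G)}\bigl(\Def^{G}_{\overline G}[V,\nu]_{G}\bigr)=
\begin{cases}
\nu(g) & \text{if } V=G \text{ and } \nu|_{\Phi(G)}=1,\\[3pt]
0 & \text{otherwise.}
\end{cases}
\]

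Inserting Barker's formula for $e^{G}_{G,g}$ then leaves only the terms indexed by $V=G$ and by $\nu\in G^{*}$ trivial on $\Phi(G)$, so
\[
m=\sum_{\substack{\nu\in G^{*}\\ \nu|_{\Phi(G)}=1}}\bigl(\text{coefficient of }[G,\nu]_{G}\text{ in }e^{G}_{G,g}\bigr)\,\nu(g).
\]
For the coefficient I would use that the pair $(G,\nu)$ is alone in its $G$-conjugacy class: for $x,y\in G$ one has $[y,x]\in[G,G]\le O(G)\le\ker\nu$, hence ${}^{x}\nu=\nu$. Thus by \eqref{barkerFormula} the coefficient of $[G,\nu]_{G}$ equals $\frac{|G|}{|N_{G}(G,g)|}\,\mu_{G}(G,\nu;G,g)$, and in the monomial M\"obius function only the summand coming from $(G,\nu)$ itself survives, giving
\[
\mu_{G}(G,\nu;G,g)=\frac{\mu(G,G)}{|G|}\sum_{o\in O(G)}\nu(go)^{-1}=\frac{|O(G)|}{|G|}\,\nu(g)^{-1},
\]
since $\mu(G,G)=1$ and $\nu$ is trivial on $O(G)$. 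Hence the coefficient of $[G,\nu]_{G}$ is $\frac{|O(G)|}{|N_{G}(G,g)|}\nu(g)^{-1}$, its product with $\nu(g)$ is $\frac{|O(G)|}{|N_{G}(G,g)|}$ independently of $\nu$, and therefore
\[
m=\frac{|O(G)|}{|N_{G}(G,g)|}\cdot\bigl|\{\nu\in G^{*}:\nu|_{\Phi(G)}=1\}\bigr|=\frac{|O(G)|}{|N_{G}(G,g)|}\cdot|\Hom(\overline G,A)|.
\]
Finally $\overline G=G/\Phi(G)$ is elementary abelian of exponent $p$ and $A$ is cyclic of order divisible by $p$, so every homomorphism $\overline G\to A$ lands in the $p$-torsion $A[p]\cong C_{p}$ and $|\Hom(\overline G,A)|=|\overline G|$, which is the asserted value.

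The main obstacle is the first displayed formula — the behaviour of deflation on transitive monomial sets, with its $A$-freeness dichotomy — which requires a careful (if routine) unwinding of the tensor product $\otimes_{AG}$; after that the only delicate point is reading off Barker's monomial M\"obius function in the case $V=G$, where it collapses because conjugation fixes $G^{*}$ and $\nu$ kills $O(G)$. One should also take care that every step is carried out for an arbitrary $p$-group $G$, not merely an abelian one.
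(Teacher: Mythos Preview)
Your argument is correct and follows essentially the same strategy as the paper's proof: both start from Theorem~\ref{deflationconstant}(ii) to reduce to computing the constant $m$, both use the deflation formula for transitive $A$-fibered sets, and both exploit the Frattini property $V\Phi(G)=G\Rightarrow V=G$ to collapse the sum to the single subgroup $V=G$, after which Barker's monomial M\"obius function at $(G,\nu)$ is evaluated using $[G,G]\le O(G)\le\ker\nu$.

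The only difference is the linear functional used to extract $m$. The paper compares the coefficient of $[\overline G,1]_{\overline G}$ on the two sides: on the right this requires the small auxiliary computation $\mu_{\overline G}(\overline G,1;\overline G,g\Phi(G))=1/|\overline G|$, and on the left only the single pair $(G,1)$ contributes, so the factor $|\overline G|$ arises from the denominator of that coefficient. You instead apply the species $s^{\overline G}_{\overline G,g\Phi(G)}$, which gives $m$ on the right for free, while on the left all pairs $(G,\nu)$ with $\nu|_{\Phi(G)}=1$ contribute equally and the factor $|\overline G|$ appears as $|\Hom(\overline G,A)|$. This dual viewpoint is a mild streamlining rather than a different method.
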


\begin{proof}
Recall the idempotent formula 
\begin{align*}
e^{G}_{G,g} = \frac{1}{\lvert N_{G}(G,g) \rvert}\sum_{(V, \nu) \in_{G} \mathcal{M}_{G}(A)} \lvert V \rvert \mu _{G}(V,\nu; G,g)[V, \nu]_{G}.
\end{align*}
By the definition of the deflation maps, we have
\begin{align*}
\Def^{G}_{\overline G}e^{G}_{G,g} = \frac{1}{\lvert N_{G}(G,g) \rvert}\sum_{\substack{(V, \nu) \in_{G} \mathcal{M}_{G}(A)\\ V\cap \Phi (G)\leq \mbox{\rm ker}\nu}} \lvert V \rvert \mu _{G}(V,\nu; G,g)[V\Phi (G)/\Phi (G), \bar{\nu}]_{\overline G}. 
\end{align*}
Then Equation (\ref{defconstant}) becomes
\begin{align*}
&\frac{1}{\lvert N_{G}(G,g) \rvert}\sum_{\substack{(V, \nu) \in_{G} \mathcal{M}_{G}(A)\\ V\cap \Phi (G)\leq ker\nu}} \lvert V \rvert 
\mu _{G}(V,\nu; G,g)[V\Phi (G)/\Phi (G), \bar{\nu}]_{\overline G} \\&
= \frac{m}{\lvert N_{\overline G}(\overline G,g\Phi (G)) \rvert}\sum_{(W, \omega)} \lvert W \rvert 
\mu _{\overline G}(W,\omega; \overline G,g\Phi (G))[W, \omega]_{\overline G}.
\end{align*}
Here $(W,\omega)$ runs over a set of representatives of the $\overline G$-conjugacy classes of the set 
$\mathcal M_{\overline G}(A)$. Now the coefficient of $[\overline G, 1]_{\overline G}$ in the right-hand side is
\begin{align*}
 \frac{m}{\lvert N_{\overline G}(\overline G,g\Phi (G)) \rvert} \lvert \overline G \rvert 
 \mu _{\overline G}(\overline G,1; \overline G,g\Phi (G)).
\end{align*}
We also have
\begin{align*}
\mu _{\overline G}\big(\overline G,1; \overline G,g\Phi (G)\big) = \sum_{(V,\nu) \in [\overline G, 1]_{\overline G}} \big\lvert V\cap 
g\Phi(G)O\big(\overline G\big) \big\rvert \mu (V,\overline G)/\lvert V \rvert = \frac{1}{\lvert \overline G\rvert}. 
\end{align*}
Here the last equality holds since the only pair which is $\overline G$-conjugate to $[\overline G, 1]_{\overline G}$ is $(\overline G, 1)$ and hence the sum collapses to the term
$ \big\lvert \overline G\cap g\Phi(G)O\big(\overline G\big) \big\rvert \mu (\overline G,\overline G)/\lvert \overline G \rvert$ and the intersection 
$\overline G\cap g\Phi(G)O\big(\overline G\big)$ consist only of the element $g\Phi(G)$.  
Thus, the coefficient is 
\begin{align*}
\frac{m}{\lvert N_{\overline G}(\overline G,g\Phi (G)) \rvert}=\frac{m}{\lvert \overline G\rvert}.
\end{align*}
On the other hand, the coefficient of $[\overline G, 1]_{\overline G}$ on the left-hand side is
\begin{align*}
\frac{1}{\lvert N_{G}(G,g) \rvert}\sum_{V\Phi (G)=G} \lvert V \rvert \mu _{G}(V,1; G,g).
\end{align*}
Since $\Phi (G)$ is the Frattini subgroup of $G$, the equality $V\Phi (G)=G$ implies that $V=G$. Then 
\begin{align*}
\mu _{G}(G,1; G,g) = \sum_{(W,1) \in [G, 1]_{G}} \lvert W\cap gO(G)) \rvert \mu (W,G)/\lvert W \rvert= \lvert O(G) \rvert/\lvert
 G\rvert.
\end{align*}
Therefore we get
\begin{align*}
m=\frac{\lvert O(G) \rvert}{\lvert N_{G}(G,g) \rvert}\cdot \lvert G/\Phi(G)\rvert
\end{align*}
as required. \qed
\end{proof}

Now let $F$ be a subfunctor of $kB^n$ and $G$ be a minimal group for $F$.
If $G$ is not elementary abelian, then the Frattini subgroup $\Phi(G)$ of $G$ is nontrivial. Also, the coefficient 
$m=\frac{\lvert O(G) \rvert}{\lvert N_{G}(G,g) \rvert}\cdot \lvert \overline G\rvert$ in the previous lemma is non-zero. Indeed, all the 
terms $\lvert O(G) \rvert$, $\lvert \overline G \rvert$ and $\lvert N_{G}(G,g) \rvert$ are orders of subgroups of the $p$-group $G$ and hence 
$m$ is a power of $p$ and the characteristic $q$ of the field $k$ is not equal to $p$. But since the
deflation of $e_{G,g}^G$ to a non-trivial normal subgroup is non-zero, we conclude that $G$ is not a minimal group. This proves the 
following result.

\begin{prop}\label{non-elt} 
Let $F$ be a subfunctor of $kB^n$ and $G$ be a minimal group for $F$. Then $G$ is elementary abelian.
\end{prop}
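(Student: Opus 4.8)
The plan is to combine the computation in Lemma \ref{lem:def-p} with the minimality hypothesis. Suppose, for contradiction, that $G$ is a minimal group for $F$ but $G$ is not elementary abelian. Then the Frattini subgroup $\Phi(G)$ is a nontrivial normal subgroup of $G$, so deflation to the proper quotient $\overline G = G/\Phi(G)$ factors through a group of strictly smaller order. Since $F(G)$ is an ideal of $kB^n(G)$ (Proposition \ref{ideal}), and every idempotent generator must, by the restriction argument given just before Lemma \ref{lem:def-p}, be of the form $e^G_{G,g}$, it suffices to show $\Def^G_{\overline G} e^G_{G,g} \neq 0$ to contradict minimality.

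First I would invoke Lemma \ref{lem:def-p}, which gives
\[
\Def^{G}_{\overline G}e^{G}_{G,g} = \frac{\lvert O(G) \rvert}{\lvert N_{G}(G,g) \rvert}\cdot \lvert \overline G\rvert\cdot e^{\overline G}_{\overline G,g\Phi (G)}.
\]
Next I would observe that the scalar coefficient $m = \lvert O(G)\rvert \cdot \lvert \overline G\rvert / \lvert N_G(G,g)\rvert$ is, up to sign, a ratio of orders of subgroups of the $p$-group $G$; in particular $\lvert O(G)\rvert$, $\lvert \overline G\rvert$ and $\lvert N_G(G,g)\rvert$ are all powers of $p$, so $m$ is (up to sign) a power of $p$, hence a nonzero element of $k$ because $\operatorname{char} k = q \neq p$. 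Since $e^{\overline G}_{\overline G, g\Phi(G)}$ is a primitive idempotent of $kB^n(\overline G)$, it is in particular nonzero, so the product $m \cdot e^{\overline G}_{\overline G, g\Phi(G)}$ is nonzero in $kB^n(\overline G)$.

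Finally I would close the argument: the deflation $\Def^G_{\overline G}$ is realized by the fibered biset $\Def^G_{G/\Phi(G)}$, which factors through the group $\overline G$ of order $\lvert G\rvert / \lvert \Phi(G)\rvert < \lvert G\rvert$. If $G$ were minimal for $F$, then any such morphism applied to any element of $F(G)$ would have to vanish, in particular $\Def^G_{\overline G} e^G_{G,g} = 0$ for the generating idempotents $e^G_{G,g}$ of the ideal $F(G)$. This contradicts the previous paragraph. Therefore $\Phi(G)$ must be trivial, i.e. $G$ is elementary abelian. I do not expect any serious obstacle here: the only point requiring care is the bookkeeping that ensures $F(G)$ really is generated by idempotents of the form $e^G_{G,g}$ (so that it is enough to test deflation on these), but that is exactly the restriction computation $\Res^G_H e^G_{H,h} = e^H_{H,h}$ recorded immediately before the lemma together with $F(H) = 0$ for proper $H < G$.
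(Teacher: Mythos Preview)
Your proposal is correct and follows essentially the same route as the paper: assume $\Phi(G)\neq 1$, apply Lemma \ref{lem:def-p} to see that $\Def^G_{\overline G}e^G_{G,g}$ equals a $p$-power multiple of a primitive idempotent, observe that this scalar is invertible in $k$ since $q\neq p$, and contradict minimality because $F(\overline G)=0$. The only cosmetic remark is that the phrase ``up to sign'' is unnecessary---all the quantities $|O(G)|$, $|\overline G|$, $|N_G(G,g)|$ are positive $p$-powers, so $m$ is literally a (possibly negative) integer power of $p$.
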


Our next goal is to find which elementary abelian $p$-groups can be a minimal group of a subfunctor of $kB^{n}$. We first evaluate the deflation map
on the primitive idempotents $e_{G,g}^G$ when $G$ is an elementary abelian $p$-group.

\begin{prop}\label{minimalconstant}
Let $G$ be an elementary abelian $p$-group of rank $r$, let $h$ be a non-trivial element of $G$ and $H=<h>$ be the subgroup generated by $h$. Then, we have
\begin{align}
\Def^{G}_{G/H}e^{G}_{G,g}= m_{g,H}\cdot e^{G/H}_{G/H,gH}
\end{align}
where 
$$
m_{g,H} =   \left\{
\begin{array}{ll}
      \frac{1-p^{r-1}}{p} &  ~\text{if}~ g=1, \vspace{0.1in}\\
      \frac{1}{p} &  ~\text{if}~ g\neq 1, g\in H,\vspace{0.1in} \\
      \frac{1-p^{r-2}}{p} &  ~\text{if}~ g\notin H. \\
\end{array} 
\right.
$$
\end{prop}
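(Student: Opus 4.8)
The plan is to mimic the proof of Lemma \ref{lem:def-p}, but now keeping careful track of \emph{all} coefficients on the right-hand side, not just the coefficient of $[\overline G,1]$. Since $H=\langle h\rangle$ has order $p$ and $G$ is elementary abelian of rank $r$, the quotient $G/H$ is elementary abelian of rank $r-1$, and we already know from Theorem \ref{deflationconstant}(ii) that $\Def^G_{G/H}e^G_{G,g}=m_{g,H}\cdot e^{G/H}_{G/H,gH}$ for a single scalar $m_{g,H}$. So the entire content is to pin down that scalar, and the fastest route is to compare the coefficient of one convenient transitive generator on both sides of the identity
\[
\frac{1}{\lvert N_{G}(G,g)\rvert}\sum_{\substack{(V,\nu)\in_G\mathcal M_G(A)\\ V\cap H\le\ker\nu}}\lvert V\rvert\,\mu_G(V,\nu;G,g)\,[VH/H,\bar\nu]_{G/H}
= \frac{m_{g,H}}{\lvert N_{G/H}(G/H,gH)\rvert}\sum_{(W,\omega)}\lvert W\rvert\,\mu_{G/H}(W,\omega;G/H,gH)\,[W,\omega]_{G/H}.
\]
First I would observe, exactly as in Lemma \ref{lem:def-p}, that since $G$ is abelian, $N_G(G,g)=G$ and $N_{G/H}(G/H,gH)=G/H$, so the normalizer factors are just $\lvert G\rvert$ and $\lvert G/H\rvert=\lvert G\rvert/p$.

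Next I would read off the coefficient of $[G/H,1]_{G/H}$ on the right-hand side: the only pair $G/H$-conjugate to $(G/H,1)$ is itself, so $\mu_{G/H}(G/H,1;G/H,gH)=\lvert G/H\cap gH\,O(G/H)\rvert\,\mu(G/H,G/H)/\lvert G/H\rvert$. Since $O(G/H)$ is the intersection of kernels of all homomorphisms $G/H\to\mu_n$ and $G/H$ is elementary abelian with $p\mid\lvert\mu_n\rvert$, we have $O(G/H)=1$, so this equals $1/\lvert G/H\rvert$ if $gH=1$ (i.e.\ $g\in H$) and $0$ otherwise. On the left-hand side, the coefficient of $[G/H,1]_{G/H}$ collects those $(V,\nu)$ with $VH=G$, $\nu=1$ on $V$; since $G$ is elementary abelian, $VH=G$ forces $V=G$ when $h\notin V$, but $V$ of index $p$ with $H\le V$ also satisfies $VH=G$ only if $H\not\le V$, so in fact $VH=G$ with $H\le\ker 1$ trivial forces $V$ to be a complement of $H$; here I must be careful, because $G$ elementary abelian has many index-$p$ subgroups not containing $h$. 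This enumeration of the $V$ with $VH=G$ (together with the values of $\mu_G(V,1;G,g)=\lvert V\cap gO(G)\rvert\,\mu(V,G)/\lvert V\rvert$ and of the M\"obius function $\mu(V,G)$ on the subgroup lattice of an elementary abelian $p$-group) is the crux.

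The key computational inputs are: (a) for $V$ between index $p$ and $G$ in an elementary abelian $p$-group of rank $r$, $\mu(V,G)$ is $1$ if $V=G$ and $-1$ if $[G:V]=p$ and $0$ otherwise, by the standard formula for the M\"obius function of the subgroup lattice of $\ZZ/p\times\cdots\times\ZZ/p$ (here only the codimension-$\le 1$ part is needed because $VH=G$ and $\lvert H\rvert=p$); (b) $O(G)=1$ and $O(G/H)=1$, so the monomial factors $\lvert V'\cap hO(H)\rvert$ etc.\ reduce to small group orders; and (c) the count of index-$p$ subgroups of $G$ not containing a fixed nontrivial $h$, which is $p^{r-1}$ out of $(p^r-1)/(p-1)$ total hyperplanes. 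Then $m_{g,H}$ falls out by equating the $[G/H,1]_{G/H}$-coefficients: on the right we get $m_{g,H}\cdot\frac{p}{\lvert G\rvert}\cdot\lvert G/H\rvert\cdot\mu_{G/H}(\ldots)$, and on the left we get $\frac{p}{\lvert G\rvert}\sum_{VH=G}\lvert V\rvert\mu_G(V,1;G,g)$; the three cases $g=1$, $1\ne g\in H$, $g\notin H$ arise precisely because $\lvert V\cap gO(G)\rvert=\lvert V\cap\{g\}\rvert$ equals $1$ when $g\in V$ and $0$ otherwise, and whether $g$ lies in the various complements of $H$ depends on these three cases. I expect the main obstacle to be the bookkeeping in step (a)--(c): correctly enumerating which index-$p$ subgroups $V$ satisfy $VH=G$ and contain $g$ in the three cases, and matching the combinatorial count $p^{r-1}$ (resp.\ $p^{r-2}$) of such $V$ to produce the stated numerators $1-p^{r-1}$, $1$, and $1-p^{r-2}$; everything else is a routine substitution once $O(-)=1$ and the M\"obius values are in hand.
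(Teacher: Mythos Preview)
Your strategy matches the paper's: simplify Barker's formula using $O(G)=O(G/H)=1$ and abelianness, apply deflation, and compare the coefficient of $[G/H,1]_{G/H}$ on both sides. But you make a computational slip that would derail the third case. You claim $\mu_{G/H}(G/H,1;G/H,gH)$ vanishes when $g\notin H$. It does not: since $O(G/H)=1$, the set $(G/H)\cap gH\cdot O(G/H)$ is the singleton $\{gH\}\subset G/H$ regardless of whether $gH$ is the identity, and the trivial character sums to $1$ over it. Hence the right-hand coefficient of $[G/H,1]_{G/H}$ equals $m_{g,H}/p^{r-1}$ in \emph{all} three cases, and the comparison goes through uniformly---which is in fact what you implicitly assume later when you invoke the count $p^{r-2}$ for the case $g\notin H$.

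Your description of the subgroups $V$ with $VH=G$ is also garbled (``$VH=G$ forces $V=G$ when $h\notin V$'' is backwards). The clean statement is: since $|H|=p$, the equation $VH=G$ holds precisely when $V=G$ or $V$ is a complement of $H$. On the left-hand side the coefficient of $[G/H,1]_{G/H}$ is then $\frac{1}{|G|}\sum_V\mu(V,G)$, summed over those $V$ with $VH=G$ and $g\in V$ (the latter condition coming from the factor $|V\cap\{g\}|$). For $g=1$ every such $V$ qualifies; for $1\neq g\in H$ no complement contains $g$, so only $V=G$ survives; for $g\notin H$ exactly $p^{r-2}$ of the $p^{r-1}$ complements contain $g$. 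This gives $(1-p^{r-1})/p^r$, $1/p^r$, $(1-p^{r-2})/p^r$ respectively, and equating with $m_{g,H}/p^{r-1}$ yields the stated constants. Once these two points are corrected, your argument is exactly the paper's.
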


\begin{proof}
Recall the idempotent formula
\begin{align*}
e^{G}_{G,g}=\frac{1}{\lvert N_{G}(G,g) \rvert}\sum_{(V, \nu) \in_{G} \mathcal{M}_{G}(A)} \lvert V \rvert \mu _{G}(V,\nu; G,g)[V,\nu]_{G} .
\end{align*}
Since $G$ is elementary abelian, and hence abelian, the $G$-actions seen in the formula are all trivial and hence the formula
becomes
\begin{align*}
e^{G}_{G,g}=\frac{1}{\lvert G \rvert}\sum_{(V, \nu) \in \mathcal{M}_{G}(A)} \lvert V \rvert \mu(V,\nu; G,g)[V,\nu]_{G} .
\end{align*}
Furthermore, since $G$ is a non-trivial elementary abelian $p$-group, the subgroup $O(G)$ is the same as the intersection of kernels of the
irreducible complex characters, and hence it is trivial. Therefore,
\[
\mu(V,\nu; G,g) =\nu(g) \mu(V,G)/|V|
\]
and we have 
\begin{align*}
e^{G}_{G,g}=\frac{1}{\lvert G \rvert}\sum_{(V, \nu) \in \mathcal{M}_{G}(A)} \nu(g) \mu(V,G)[V,\nu]_{G} .
\end{align*}
Now we need to apply the deflation map to both sides. As in the proof of Lemma \ref{lem:def-p}, we have
\begin{align*}
\Def^{G}_{G/H}e^{G}_{G,g} = \frac{1}{\lvert G \rvert}\sum_{\substack{(V, \nu) \in \mathcal{M}_{G}(A)\\ V\cap H\leq \mbox{\rm ker}\nu}} \nu(g)\mu(V, G)[VH/H, \bar{\nu}]_{G/H}. 
\end{align*}
To evaluate the right hand side of the above equality, we consider three separate cases, namely, $g=1$ or $1\neq g\in H$ or $g\notin H$.
For the first case, suppose $g=1$. Then the above equality becomes
\begin{align*}
\Def^{G}_{G/H}e^{G}_{G,1} = \frac{1}{\lvert G \rvert}\sum_{\substack{(V, \nu) \in \mathcal{M}_{G}(A)\\ V\cap H\leq \mbox{\rm ker}\nu}} \mu(V, G)[VH/H, \bar{\nu}]_{G/H}. 
\end{align*}
On the other hand, by Equation \ref{defconstant}, the left hand side of the above equation is also equal to a multiple of
\begin{align*}
e^{G/H}_{G/H,H}=\frac{1}{\lvert G:H \rvert}\sum_{(V, \nu) \in \mathcal{M}_{G/H}(A)}  \mu(V,G/H)[V,\nu]_{G/H}. 
\end{align*}
Thus, we have the equality
\begin{align*}
\frac{1}{\lvert G \rvert}\sum_{\substack{(V, \nu) \in \mathcal{M}_{G}(A)\\ V\cap H\leq \mbox{\rm ker}\nu}}  \mu(V,G)[VH/H,\bar{\nu}]_{G/H}=\frac{m}{\lvert G:H \rvert}\sum_{(W, \omega) \in \mathcal{M}_{G/H}(A)}  \mu(W,G/H)[W,\omega]_{G/H}.
\end{align*} 
To determine the constant $m$, we compare the coefficients. Note that the coefficient of $[G/H, 1]_{G/H}$ in the right hand side 
is $\frac{m}{p^{r-1}}$. In the left hand side, it is 
$$\frac{1}{\lvert G \rvert} \sum_{V}  \mu(V,G)$$ 
where $V$ runs over the subgroups satisfying $VH = G$. But, the last equality implies either that $V=G$ or that $V$ is a complement of $H$ in 
$G$. If $V$ is a complement of $H$, then $\lvert V \rvert=p^{r-1}$. But, in this case, $V$ is maximal subgroup of $G$. Thus, 
$\mu(V,G)=-1$. Note that there are $p^{r-1}$ many complements of $H$. If $V=G$, then obviously we have $\mu(V,G)=1$. 
Therefore, the coefficient in the left hand side becomes $\frac{1-p^{r-1}}{p^{r}}$. We conclude that $m=\frac{1-p^{r-1}}{p}$, as 
required.

For the second case, we let $1\neq g\in H$. As above, we have 
\begin{eqnarray*}
\frac{1}{\lvert G \rvert}\sum_{\substack{(V, \nu) \in \mathcal{M}_{G}(A)\\ V\cap H\leq \mbox{\rm ker}\nu}} \nu^{-1}(g)  \mu(V,G)[VH/H,\bar{\nu}]_{G/H}&=&\\ \frac{m}{\lvert G:H \rvert}\sum_{(W, \omega)}\omega^{-1}(gH)  \mu(W,G/H)[W,\omega]_{G/H}. 
\end{eqnarray*}

The coefficient of $[G/H, 1]_{G/H}$ in the right hand side is again $\frac{m}{p^{r-1}}$. In the left hand side, it is equal to the sum 
$$\frac{1}{\lvert G \rvert} \sum_{V}  \mu(V,G)$$
where $V$ runs over subgroups containing $g$ and satisfying $VH=G$. Note that since $g \in H\cap V$ and $g \neq 1$, the 
subgroup  $V$ cannot be a complement of $H$ and hence we must have $V=G$. Therefore the coefficient becomes 
$\frac{1}{p^{r}}$ and we hence we get that $m=\frac{1}{p}$, as required.

The last case where $g\notin H$ is similar to the above cases. We do not include the proof.\qed 
\end{proof}

With this result, we can determine the subfunctors and minimal groups more explicitly. 
Let $\mathcal{I}=\{0\}\cup \{r\in \mathbb{N}\mid p^{r-1}\equiv 1~(\text{mod}~q)\}$ be the set of powers $r$ of $p$ for which all 
proper deflations by cyclic subgroups of the idempotent $e_{E,1}^E$, with $E$ elementary abelian of rank $r$, is zero. We enumerate the elements of 
$\mathcal{I}=\{r_{i}\}_{i=0}^{\infty}$ such that $i<j$ implies $r_{i}<r_{j}$. Then we have the following theorem.

\begin{theorem}\label{generator}
Let $F$ be a subfunctor of $kB^{n}$ and $G$ be a minimal group of $F$. Then, 
\begin{enumerate}
\item[\mbox{\rm (i)}] the group $G$ is elementary abelian of order $p^{r}$, for some $r\in \mathcal I$,
\item[\mbox{\rm (ii)}] the $k$-vector space $F(G)$ is 1-dimensional generated by $e^{G}_{G,1}$ and
\item[\mbox{\rm (iii)}] the subfunctor $F$ is generated by $e_{G,1}^G$.
\end{enumerate}
\end{theorem}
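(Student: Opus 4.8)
The plan is to get (i) and (ii) from a direct analysis of the ideal $F(G)\subseteq kB^n(G)$, and then to obtain (iii) by an induction on $|L|$ showing that $F$ coincides with the subfunctor $\langle e^G_{G,1}\rangle$ that it visibly contains.

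\emph{Parts (i) and (ii).} If $G=1$ there is nothing to prove ($0\in\mathcal I$, and $\langle e^1_{1,1}\rangle=kB^n$), so assume $G\ne 1$. By Proposition \ref{non-elt}, $G$ is elementary abelian of some rank $r\ge 1$, hence $O(G)=1$; by Proposition \ref{ideal}, $F(G)$ is a nonzero ideal of the split semisimple algebra $kB^n(G)$, so it is a direct sum of lines $k\,e^G_{H,h}$. A summand with $H<G$ is impossible: $\Res^G_H e^G_{H,h}=e^H_{H,h}$ would force $e^H_{H,h}\in F(H)$, contradicting minimality of $G$. A summand $e^G_{G,g}$ with $g\ne 1$ is also impossible: by Proposition \ref{minimalconstant} one gets $\Def^G_{G/\langle g\rangle}e^G_{G,g}=\frac1p\,e^{G/\langle g\rangle}_{G/\langle g\rangle,1}\ne 0$ inside $F(G/\langle g\rangle)$, again contradicting minimality. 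Hence $F(G)=k\,e^G_{G,1}$, which is (ii). Finally, for $1\ne y\in G$ minimality gives $F(G/\langle y\rangle)=0$, so by Proposition \ref{minimalconstant} (with $g=1$) the element $\Def^G_{G/\langle y\rangle}e^G_{G,1}=\frac{1-p^{r-1}}{p}\,e^{G/\langle y\rangle}_{G/\langle y\rangle,1}$ must vanish; since $p$ is invertible in $k$ this means $1-p^{r-1}=0$ in $k$, i.e. $p^{r-1}\equiv 1\pmod q$, that is $r\in\mathcal I$, which is (i).

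\emph{Part (iii).} Put $F':=\langle e^G_{G,1}\rangle\subseteq F$; I will prove $F(L)=F'(L)$ for all $p$-groups $L$ by induction on $|L|$. For $|L|\le|G|$ it is clear: either $F(L)=0$, or $L$ is itself a minimal group of $F$ and then (i)--(ii) give $F(L)=k\,e^L_{L,1}$ with $e^L_{L,1}$ the image of $e^G_{G,1}$ under a transport-of-structure isomorphism $G\to L$, hence in $F'(L)$. Now let $|L|>|G|$, assume the claim below $|L|$, and recall that $F'(L)\subseteq F(L)$ are ideals of $kB^n(L)$, so it suffices to show that every primitive idempotent $e^L_{H,h}\in F(L)$ lies in $F'(L)$. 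If $H<L$, then $\Res^L_H e^L_{H,h}=e^H_{H,h}\in F(H)=F'(H)$ by induction, and $\Ind^L_H e^H_{H,h}=\lvert N_L(H,h):N_H(H,h)\rvert\,e^L_{H,h}$ lies in $F'(L)$, the index being a power of $p$ and hence a unit in $k$; so $e^L_{H,h}\in F'(L)$. If $H=L$ and $\Phi(L)\ne 1$, write $\overline L=L/\Phi(L)$: by Lemma \ref{lem:def-p}, $\Def^L_{\overline L}e^L_{L,h}$ is a power-of-$p$ multiple of $e^{\overline L}_{\overline L,h\Phi(L)}$, so $e^{\overline L}_{\overline L,h\Phi(L)}\in F(\overline L)=F'(\overline L)$ by induction; inflating back, Theorem \ref{deflationconstant}(i) and the Frattini property give $\Inf^L_{\overline L}e^{\overline L}_{\overline L,h\Phi(L)}=\sum_j e^L_{L,j}$ with $e^L_{L,h}$ occurring, and since $F'(L)$ is an ideal, multiplying by $e^L_{L,h}$ yields $e^L_{L,h}\in F'(L)$. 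If $H=L$ and $\Phi(L)=1$, so $L$ is elementary abelian of rank $s>r$, pick a subgroup $M\le L$ with $h\in M$ and $[L:M]=p^r$ (possible since $s>r$); then $L/M\cong G$, the composite of a transport-of-structure isomorphism $G\to L/M$ with $\Inf^L_{L/M}$ sends $e^G_{G,1}$ into $F'(L)$, and by Theorem \ref{deflationconstant}(i) that image is a sum of idempotents $e^L_{L,j}$ with $j\in M$, among them $e^L_{L,h}$; once more the ideal property gives $e^L_{L,h}\in F'(L)$. This completes the induction, so $F=F'$.

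The step I expect to be the crux is the case $H=L$ of (iii): unlike the case of a proper subgroup, an idempotent supported at the whole group cannot be transported down to a single smaller group by restriction. The argument rests on three points: deflation to the Frattini quotient has a coefficient that is a power of $p$, so it never dies in characteristic $q\ne p$ (Lemma \ref{lem:def-p}) and is the correct move when $L$ is not elementary abelian; for $L$ elementary abelian of larger rank one must instead realize $e^L_{L,h}$ by inflating from a quotient isomorphic to $G$; and in either case the inflation of a primitive idempotent is a sum over a coset family rather than a single idempotent, so Proposition \ref{ideal} — the fact that each $F'(L)$ is an ideal — is exactly what lets one extract the term one wants.
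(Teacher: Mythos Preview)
Your proof is correct and follows essentially the same route as the paper's: parts (i)--(ii) are argued identically, and for (iii) your direct induction on $|L|$ is the paper's minimal-counterexample argument reorganized, using the same three moves (restriction--induction for $H<L$, deflation--inflation through the Frattini quotient when $\Phi(L)\ne 1$, and inflation from a quotient isomorphic to $G$ in the elementary abelian case). One small slip: in the elementary abelian step, $\Inf^L_{L/M}e^{L/M}_{L/M,1}$ is a sum of $e^L_{K,k}$ over \emph{all} pairs with $KM=L$ and $k\in K\cap M$, not only those with $K=L$; but $e^L_{L,h}$ is still among the summands and your ideal argument extracts it, so the conclusion stands --- and your unified choice of $M\ni h$ is in fact slightly cleaner than the paper's separate treatment of $t=1$ and $t\ne 1$.
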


\begin{proof}
By Proposition \ref{non-elt}, we know that $G$ is elementary abelian. We also know that $F(G)$ is generated by idempotents of 
the form $e^{G}_{G,g}$. Suppose that the idempotent $e^{G}_{G,g}$ is contained in $F(G)$ for some $g\neq 1$. Then by Proposition
 \ref{minimalconstant}, we have
$$0 \neq \Def^{G}_{G/<g>}e^{G}_{G,g}=\frac{1}{p}\cdot e^{G/<g>}_{G/<g>,<g>}\in F(G/<g>).$$
 But $G$ is minimal and hence $F(G/<g>) = 0$, contradiction. Therefore, the idempotent $e^{G}_{G,1}$ must generate $F(G)$. Moreover, if $G$ has rank $r$, then, by Proposition \ref{minimalconstant}, 
$$\Def^{G}_{G/<g>}e^{G}_{G,1}=\frac{1-p^{r-1}}{p}\cdot e^{G/<g>}_{G/<g>,<g>}\in F(G/<g>).$$ 
Thus, we must have $p^{r-1}\equiv 1~\text{mod}~q$. Thus we have proved the first two parts of the theorem. 

To prove the last part, let $G = E_r$ be an elementary abelian group of rank $r$ and  
suppose, for a contradiction, that the idempotent $e^{E_{r}}_{E_{r},1}$ does not generate $F$ and let $K$ denote the subfunctor of 
$F$ generated by $e^{E_{r}}_{E_{r},1}$. Then, there exists a group $T$ such that for some element $x\in F(T)$, we have 
$x\notin K(T)$. Suppose $T$ has minimal order with respect to the property that $F(T)\neq K(T)$. Since $F(T)$ is an ideal of $kB^{n}(T)$, it is generated by a set $I$ of primitive idempotents. Thus, in the primitive 
idempotent basis, we have 
$$x=\sum_{I}x^{T}_{H,h}\cdot e^{T}_{H,h}$$
for some constants $x^{T}_{H,h}\in k$. This implies that for some pair $(H,h)$, the idempotent $e^{T}_{H,h}$ is not contained in 
$K(T)$.  

To determine the set $I$, suppose that $e^{T}_{H,h}\notin K(T)$ and $H\neq T$. Then, by the minimality of $T$, we have 
$\Res^{T}_{H} e^{T}_{H,h} \in K(H)$. So, for some $X\in kB^n(H\times E_r)$, we have 
$$\Res^{T}_{H} e^{T}_{H,h}= X\cdot e^{E_{r}}_{E_{r},1}.$$
Thus, multiplying both sides by an induction biset, we get 
$$\Ind^{T}_{H}\Res^{T}_{H} e^{T}_{H,h}=(\Ind^{T}_{H}X)e^{E_{r}}_{E_{r},1}.$$ 
But note that 
$$e^{T}_{H,h}\cdot \Ind^{T}_{H}\Res^{T}_{H} e^{T}_{H,h}=\alpha\cdot e^{T}_{H,h}$$ 
for some non-zero $\alpha\in k$. Thus, we have 
$$e^{T}_{H,h}\cdot \big((\Ind^{T}_{H}X)e^{E_{r}}_{E_{r},1}\big)=\alpha\cdot e^{T}_{H,h}$$ 
which implies that $e^{T}_{H,h}=\frac{1}{\alpha}\cdot (\widetilde{e^{T}_{H,h}}\cdot \Ind^{T}_{H}X)\cdot e^{E_{r}}_{E_{r},1}$. 
This is a contradiction since we assumed that $e_{H,h}^T$ is not in $K(T)$. So we must have $H=T$.

Next suppose that $e^{T}_{T,t}\notin K(T)$ and the Frattini subgroup $\Phi(T)$ is non-trivial. Then, again, by the minimality of $T$,
we have $\Def^{T}_{T/\Phi(T)}e^{T}_{T,t}=X\cdot e^{E_{r}}_{E_{r},1}$ for some $X\in kB^n(T/\Phi(T)\times T)$. Note that 
$$e^{T}_{T,t}\cdot (\Inf^{T}_{T/\Phi(T)}\Def^{T}_{T/\Phi(T)}e^{T}_{T,t})=\beta\cdot e^{T}_{T,t}$$ for some non-zero $\beta\in k$. 
Thus, we have $$e^{T}_{T,t}\cdot (\Inf^{T}_{T/\Phi(T)}X\cdot e^{E_{r}}_{E_{r},1})=\beta\cdot e^{T}_{T,t}$$ which implies 
$e^{T}_{T,t}=\frac{1}{\beta}\cdot (\widetilde{e^{T}_{T,t}}\cdot \Inf^{T}_{T/\Phi(T)}X)\cdot e^{E_{r}}_{E_{r},1}$ which is again a 
contradiction. So, we must have $\Phi(T)=1$ and $T$ is elementary abelian. 

Finally, suppose $e^{T}_{T,t}\notin K(T)$ and $T$ is elementary abelian. By part (i), the rank of $T$ is greater than or equal to that of $G$ and
hence there is a subgroup $U$ of $T$ isomorphic to $G$. Without loss of generality, suppose $U= G$. Now if $t\neq 1$, then we 
have
$$\Def^{T}_{T/<t>}e^{T}_{T,t}=\frac{1}{p}\cdot e^{T/<t>}_{T/<t>,<t>}$$ 
and by similar arguments, we again obtain a contradiction. Therefore, we must have $t=1$. Also, if $P\le R$ are elementary 
abelian, then the idempotent $e_{R,1}^R$ is a summand of $\Inf_{P}^R e_{P,1}^P$. In particular, the idempotent $e^{T}_{T,1}$ is a 
summand of $\Inf^{T}_{E_{r}}e^{E_{r}}_{E_{r},1}$ and hence in $K(T)$, a contradiction. This 
completes the proof of the theorem. \qed
\end{proof}

The following corollary follows immediately.
\begin{coro}
 The $A$-fibered Burnside functor $kB^{n}$ over $p$-groups is uniserial.
\end{coro}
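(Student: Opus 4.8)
The plan is to deduce uniseriality from Theorem \ref{generator} by showing that the subfunctors of $kB^n$ are totally ordered by inclusion, and in fact are determined by their unique minimal group. First I would observe that by Theorem \ref{generator}(iii), every subfunctor $F$ of $kB^n$ with minimal group of rank $r$ is generated by the single idempotent $e_{E_r,1}^{E_r}$, where $E_r$ denotes an elementary abelian $p$-group of rank $r$; in particular, any two subfunctors with the same minimal group coincide. Thus the subfunctors of $kB^n$ are in bijection with a subset of $\mathcal I$ via $F\mapsto r$, where $p^r = |G|$ for $G$ a minimal group of $F$.

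Next I would show that, writing $F_{r_i}$ for the subfunctor generated by $e_{E_{r_i},1}^{E_{r_i}}$, we have a chain
\[
kB^n = F_{r_0}\supseteq F_{r_1}\supseteq F_{r_2}\supseteq\cdots
\]
indexed by $\mathcal I = \{r_i\}_{i=0}^\infty$. The containment $F_{r_j}\subseteq F_{r_i}$ for $i<j$ follows because $E_{r_i}$ embeds as a subgroup in $E_{r_j}$, so by the last paragraph of the proof of Theorem \ref{generator} the idempotent $e_{E_{r_i},1}^{E_{r_i}}$ is a summand of $\Def^{E_{r_j}}_{E_{r_i}}e_{E_{r_j},1}^{E_{r_j}}$ (deflating along a complement to a rank-$r_i$ subgroup), hence $e_{E_{r_i},1}^{E_{r_i}}\in F_{r_j}(E_{r_i})$, so $F_{r_i}\subseteq F_{r_j}$—wait, I must be careful about the direction. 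The correct direction is: $e_{E_{r_j},1}^{E_{r_j}}$ lies in the subfunctor generated by $e_{E_{r_i},1}^{E_{r_i}}$ because it is a summand of $\Inf^{E_{r_j}}_{E_{r_i}}e_{E_{r_i},1}^{E_{r_i}}$ (the inflation-summand fact used in the final lines of the proof of Theorem \ref{generator}), giving $F_{r_j}\subseteq F_{r_i}$. I would then argue that an arbitrary subfunctor $F$, having minimal group $E_{r_i}$ for some $i$ by Theorem \ref{generator}(i), equals $F_{r_i}$, so every subfunctor appears in the chain and the lattice of subfunctors is a chain; the quotients $F_{r_i}/F_{r_{i+1}}$ are the composition factors.

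The main obstacle I anticipate is verifying that the inclusions in the chain are strict and that there are no "extra" subfunctors missed—equivalently, that a subfunctor is genuinely determined by its minimal group and that the chain exhausts all subfunctors. Theorem \ref{generator}(iii) is exactly the tool for this: it says $F$ equals the subfunctor generated by $e^{G}_{G,1}$ where $G$ is any minimal group, so two subfunctors with the same minimal group rank coincide, and a subfunctor with no nonzero evaluation at all has to be zero. Strictness of $F_{r_i}\supsetneq F_{r_{i+1}}$ follows because $F_{r_{i+1}}(E_{r_i})=0$ (as $E_{r_i}$ is too small to support $F_{r_{i+1}}$, whose minimal group has rank $r_{i+1}>r_i$) while $F_{r_i}(E_{r_i})\neq 0$. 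Putting these together: the poset of subfunctors of $kB^n$ is the chain $F_{r_0}\supsetneq F_{r_1}\supsetneq\cdots$, which is precisely the statement that $kB^n$ is uniserial, so the corollary follows.
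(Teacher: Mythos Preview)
Your proposal is correct and follows essentially the same approach as the paper: both arguments use Theorem \ref{generator}(iii) to reduce to comparing the subfunctors generated by $e_{E_r,1}^{E_r}$, and both invoke the inflation-summand fact (that $e_{E_{r_j},1}^{E_{r_j}}$ is a summand of $\Inf^{E_{r_j}}_{E_{r_i}} e_{E_{r_i},1}^{E_{r_i}}$ together with the ideal property) to establish the containment $F_{r_j}\subseteq F_{r_i}$ for $r_i<r_j$. The paper's version is slightly more streamlined---it simply takes two arbitrary subfunctors $K,L$ and shows directly that one contains the other---whereas you build the entire chain explicitly and verify strictness; but the mathematical content is the same.
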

\begin{proof}
Let $K$ and $L$ be subfunctors of $kB^A$ with the respective minimal groups $E$ and $D$. Suppose, without loss of generality,
that the rank $s$ of $E$ is less than the rank $r$ of $D$. Then by the previous theorem, $K$ (resp. L) is generated by 
$e_{E,1}^E$ (resp. $e_{D,1}^D$). We claim that $L\subset K$. To prove this, it is sufficient to show that $L(D) \subset K(D)$. But
as remarked in the proof of the previous theorem, the idempotent $e_{D,1}^D$ is a summand of $\Inf_{E}^D e_{E,1}^E$ and since
$K(D)$ is an ideal of $kB^A(D)$, we have $e_{D,1}^D\in K(D)$. Therefore, by the previous theorem, $L(D)\subset K(D)$. \qed
\end{proof}

Finally we identify the subfunctors and the composition factors of the fibered Burnside functor over $p$-groups. 
Our description is in terms of the well-known subfunctor of intersection kernels, defined as follows. Let $F$ be a fibered biset 
functor and $\mathcal{H}$ be a set of minimal groups of $F$.  Then, the $k$-module $K_{\mathcal{H}}^F(G)$ given by 
\begin{align*}
K_{\mathcal{H}}^F(G)=\bigcap_{\substack{{}_{H}X_{G}\\ H\in \mathcal{H}}}\mbox{\rm ker}\big(X: F(G)\rightarrow F(H)\big)
\end{align*}
together with the induced actions of fibered bisets is a subfunctor of $F$ (cf. \cite[Section 11]{boltjecoskun}).

\begin{prop}
Let $K_{0}$ denote $kB^{A}$. For $i\geq 0$, define $K_{i+1}$ recursively as follows. Let $H_{i}$ be the minimal group of
$K_{i}$, and put $K_{i+1}=K_{\{H_{i}\}}^{K_{i}}$.
Then, 
\begin{enumerate}
\item[\mbox{\rm (i)}] the subfunctor $K_{i+1}$ is the unique maximal subfunctor of $K_{i}$,
\item[\mbox{\rm (ii)}] the minimal group $H_i$ of the subfunctor $K_i$ is the $i$-th element $E_{r_{i}}$ of the set $\mathcal I$ 
defined above.
\end{enumerate}
\end{prop}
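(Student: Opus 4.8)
The plan is to prove both parts simultaneously by induction on $i$, leveraging the structural results already established in Theorem \ref{generator} together with the uniseriality corollary. The inductive hypothesis will be that $K_i$ is a subfunctor of $kB^n$ whose minimal group is $E_{r_i}$ and whose evaluation $K_i(E_{r_i})$ is the one-dimensional space spanned by $e^{E_{r_i}}_{E_{r_i},1}$; in particular, by Theorem \ref{generator}(iii), $K_i$ is generated by this idempotent. The base case $i=0$ is immediate: $K_0 = kB^n$ has minimal group the trivial group, which is $E_{r_0} = E_0$ since $0 \in \mathcal I$, and $kB^n(1) = k$ is spanned by $e^1_{1,1}$.

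For the inductive step, first I would address part (ii). By definition $H_i$ is the minimal group of $K_i$, which by the inductive hypothesis is $E_{r_i}$; so the content of (ii) is really the claim that the minimal group of $K_{i+1} = K_{\{E_{r_i}\}}^{K_i}$ is $E_{r_{i+1}}$, the next element of $\mathcal I$. For this, I would argue that $K_{i+1}(G) = 0$ for every group $G$ of order strictly less than $p^{r_{i+1}}$: by Theorem \ref{generator}(i) any nonzero evaluation of a subfunctor of $kB^n$ forces $G$ to have an elementary abelian quotient witnessed in $\mathcal I$, and between $r_i$ and $r_{i+1}$ there are no elements of $\mathcal I$; the subfunctor $K_{i+1}$ has lost the generator $e^{E_{r_i}}_{E_{r_i},1}$ (it lies in the kernel of the identity map $K_i(E_{r_i}) \to K_i(E_{r_i})$, hence is killed), so its minimal group cannot be $E_{r_i}$ and must be the next candidate, namely $E_{r_{i+1}}$. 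To see $K_{i+1}(E_{r_{i+1}}) \neq 0$, I would use Proposition \ref{minimalconstant}: for $G = E_{r_{i+1}}$ of rank $r_{i+1}$ with $r_{i+1} - 1 \equiv 0 \pmod q$, every proper deflation of $e^{G}_{G,1}$ by a cyclic subgroup vanishes, and all proper restrictions to subgroups $H < G$ give $e^H_{H,1}$ which maps into lower groups where $K_i$ already behaves controllably; a careful check via the action formulas (Propositions for $\Res$, $\Ind$, $\Inf$, $\Def$, and Proposition \ref{pro:iso-tw}) shows $e^{E_{r_{i+1}}}_{E_{r_{i+1}},1}$ lies in every kernel defining $K_{i+1}$, hence is a nonzero element of $K_{i+1}(E_{r_{i+1}})$.

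Then part (i): $K_{i+1}$ is the unique maximal subfunctor of $K_i$. I would first observe that $K_{i+1} \subsetneq K_i$ properly, since $e^{E_{r_i}}_{E_{r_i},1} \in K_i(E_{r_i})$ but this element is not in $K_{i+1}(E_{r_i})$ (the identity biset $E_{r_i} \to E_{r_i}$ does not annihilate it). For maximality and uniqueness, suppose $M$ is any proper subfunctor of $K_i$. Since $K_i$ is generated by $e^{E_{r_i}}_{E_{r_i},1}$, properness of $M$ forces $e^{E_{r_i}}_{E_{r_i},1} \notin M(E_{r_i})$; but $M(E_{r_i})$ is an ideal of $kB^n(E_{r_i})$ contained in the one-dimensional space $K_i(E_{r_i}) = k \cdot e^{E_{r_i}}_{E_{r_i},1}$, forcing $M(E_{r_i}) = 0$. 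Hence $M$ annihilates $e^{E_{r_i}}_{E_{r_i},1}$ at $E_{r_i}$, and since any biset $E_{r_i} \to E_{r_i}$ in particular the identity kills $M(E_{r_i})$, every element of every evaluation $M(G)$ maps to $0$ under all bisets $G \to E_{r_i}$ — that is, $M(G) \subseteq K_{i+1}(G)$ for all $G$, so $M \subseteq K_{i+1}$. This shows $K_{i+1}$ contains every proper subfunctor of $K_i$, which gives at once that it is the unique maximal one (and recovers the uniseriality corollary as a consequence). The main obstacle I anticipate is the nonvanishing claim $K_{i+1}(E_{r_{i+1}}) \neq 0$: one must verify that $e^{E_{r_{i+1}}}_{E_{r_{i+1}},1}$ survives \emph{all} bisets $E_{r_{i+1}} \to E_{r_i}$, not merely the elementary operations, which requires decomposing an arbitrary such biset via Theorem \ref{thm:decomposition} into induction, twist, inflation, transport, deflation, restriction and tracking the idempotent through each factor using the action formulas of Section \ref{composition} — this bookkeeping, while routine in spirit, is where the characteristic hypothesis $q \mid p^{r_{i+1}-1}-1$ must be used essentially.
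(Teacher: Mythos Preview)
Your approach is correct and essentially the same as the paper's. Part~(i) is identical: a proper $M\subsetneq K_i$ must satisfy $M(H_i)=0$ because $K_i(H_i)$ is one-dimensional and generates $K_i$, whence $M\subseteq K_{i+1}$ by the definition of the intersection-kernel subfunctor. For part~(ii) the paper proceeds in the reverse order, first proving~(i) and then introducing the auxiliary subfunctor $K$ generated by $e^{E_{r_{i+1}}}_{E_{r_{i+1}},1}$; showing $K(E_{r_i})=0$ makes $K$ proper in $K_i$, and maximality of $K_{i+1}$ then forces $K\subseteq K_{i+1}$, so $K_{i+1}(E_{r_{i+1}})\neq 0$. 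This is logically equivalent to your direct verification that the idempotent lies in every defining kernel of $K_{i+1}$.

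One correction that actually simplifies your bookkeeping: you write that a proper restriction $\Res^G_H e^G_{G,1}$ yields $e^H_{H,1}$, but by Barker's formula it is \emph{zero}, since no pair $(J,j)$ with $J\le H<G$ can be $G$-conjugate to $(G,1)$. Consequently, decomposing an arbitrary $(E_{r_i},E_{r_{i+1}})$-biset via Theorem~\ref{thm:decomposition}, either the rightmost restriction is proper and kills the idempotent outright, or it is the identity, the twist acts by $\tilde\phi_2(1)=1$ (Proposition~\ref{pro:iso-tw}), and then a nontrivial deflation is forced (as $|E_{r_i}|<|E_{r_{i+1}}|$), which vanishes by Proposition~\ref{minimalconstant} and the hypothesis $p^{r_{i+1}-1}\equiv 1\pmod q$. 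So the ``routine bookkeeping'' you anticipated is in fact a two-line case split.
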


\begin{proof}
Note that $K_{i+1}$ is a subfunctor by definition. To see that it is maximal in $K_i$, let $F\subset K_{i}$ be a proper subfunctor. 
We need to show that for any group $G$, we have $F(G)\subseteq K_{i+1}(G)$. To prove this inclusion, it suffices to show that 
$F(H_i)=0$. Indeed, let $x\in F(G)$ be an arbitrary element. Then, for any $A$-fibered 
$(H_i,G)$-biset ${}_{H_i}X_{G}$, we have ${}_{H_i}X_{G}\cdot x \in F(H_i)=0$. It follows that 
$x\in K_{H_i}(G)$ by the definition of $K_{i+1}$. 

Now, note that we have $F(H_i)\subseteq K_{i}(H_i)\cong k\cdot e^{H_i}_{H_i,1}$. In particular, $K_i(H_i)$ is of dimension 1, and since $F$ is proper and 
$K_{i}$ is generated by $e^{H_i}_{H_i,1}$, we must have $F(H_i) = 0$. Indeed, otherwise  $F(H_i) = K_i(H_i)$ and hence $F = K_i$. This shows that $K_{i+1}$ is the unique maximal subfunctor of $K_{i}$, 
completing the proof of the first part. 

For the second part, since the minimal group of $K_0$ is the trivial group $1$ which corresponds to the $0$-th element of the set 
$\mathcal I$, by part (i), it is sufficient to show that if $L\subset F$ are subfunctors of $kB^A$ with $L$ maximal in $F$, and if 
the minimal group of $F$ is $E_{r_i}$, then the minimal group of $L$ is $E_{r_{i+1}}$.

To prove this claim, let the minimal group of $L$ be $E_{r_{j}}$. Since $L$ is a proper subfunctor, by Theorem \ref{generator}, 
we should have $i<j$. Let $K$ denote the subfunctor of $F$ generated by the idempotent $e^{E_{r_{i+1}}}_{E_{r_{i+1}},1}$. Then, 
$K$ is a proper subfunctor of $F$. Indeed, every $A$-fibered $(E_{r_{i}}, E_{r_{i+1}})$-biset decomposes as in Theorem 
\ref{thm:decomposition}. However, the image of $e^{E_{r_{i+1}}}_{E_{r_{i+1}},1}$ under the restriction and the deflation maps are 
zero. Thus, we have $K(E_{r_{i}})=0$. Now, $L$ being maximal guaranties that we have $K(E_{r_{i+1}})\subseteq L(E_{r_{i+1}})$. 
Since $K(E_{r_{i+1}})$ is non-zero, we conclude that $j\leq i+1$ which implies $j=i+1$.  \qed
\end{proof}

Thus we have shown that for each $r_{i}\in \mathcal{I}$, there is a subfunctor, namely $K_{i}$, of $kB^{A}$. Next we examine the 
set $\mathcal{I}=\{0\}\cup \{r\in \mathbb{N}\mid p^{r-1}\equiv 1~(\text{mod}~q)\}$ more closely. 

If $q=0$, then clearly we have $\mathcal{I}=\{0,1\}$. If $q\neq 0$, then $\mathcal{I}$ consists of all positive integers congruent to 
$1$ modulo $s$ where $s$ is the order of $p$ modulo $q$. Note further that if $q$ divides $p-1$, then the order $s$ is equal to 
1 and $\mathcal{I}$ consists of all positive integers. Now we are ready to state our main theorem.

\begin{theorem}\label{lasttheorem}
Let $A$ be a cyclic $p$-group and $k$ be a sufficiently large field of characteristic $q$ with $q\neq p$. Then, the $A$-fibered 
Burnside functor $kB^{A}$ over $p$-groups is uniserial. Moreover we have
\begin{align*}
&kB^{A}=K_{0}\supset K_{1}\supset K_{\infty}=\{0\} && \text{if}~ q=0,\\
&kB^{A}=K_{0}\supset K_{1}\supset K_{2}\supset K_{3}\supset\cdots && \text{if} ~q\neq 0 ~\text{and} ~q\mid p-1,\\
&kB^{A}=K_{0}\supset K_{1}\supset K_{1+s}\supset K_{1+2s}\supset\cdots && \text{if} ~q\neq 0 ~\text{and} ~q\nmid p-1.
\end{align*}
where the subfunctors $K_i$ are as defined above and for each $i$, the simple quotient $K_{i}/K_{i+1}$ is isomorphic to the
simple $A$-fibered $p$-biset functor $S_{E_{r_{i}},1}$. 
\end{theorem}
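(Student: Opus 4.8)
The uniseriality and the chain structure have essentially already been assembled: the previous proposition identifies the $K_i$ as the unique maximal subfunctor chain, and Theorem~\ref{generator} together with the analysis of $\mathcal I$ pins down the minimal groups $E_{r_i}$. So for this final theorem the plan is to (1) reassemble these facts into the three displayed chains according to whether $q=0$, $q\mid p-1$, or $q\nmid p-1$; and (2) identify each simple subquotient $K_i/K_{i+1}$. Step (1) is essentially bookkeeping: when $q=0$ we have $\mathcal I=\{0,1\}$ so the chain terminates after $K_1$ with $K_1=K_\infty=\{0\}$ (one must check $K_1$ is actually zero, not merely that it has no further minimal group — this follows because a nonzero subfunctor has \emph{some} minimal group, which would have to lie in $\mathcal I$); when $q\mid p-1$ the order $s$ of $p$ mod $q$ is $1$ so $\mathcal I=\NN$ and $r_i=i$, giving the chain indexed by all integers; when $q\nmid p-1$ we have $r_i=1+(i-1)s$ for $i\ge 1$, giving the stated chain $K_0\supset K_1\supset K_{1+s}\supset\cdots$ (the indexing on the $K$'s here should be read as $K_i$ with $H_i=E_{r_i}$; I would state this compatibility explicitly to avoid confusion between the two index conventions).

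The substantive step is (2), the identification $K_i/K_{i+1}\cong S_{E_{r_i},1}$. The plan is: by construction $K_{i+1}$ is the unique maximal subfunctor of $K_i$, so $K_i/K_{i+1}$ is a simple $A$-fibered $p$-biset functor. Its minimal group is $H_i=E_{r_i}$, the minimal group of $K_i$, because $K_{i+1}(E_{r_i})=0$ (the intersection-kernel construction kills the minimal-group evaluation) while $K_i(E_{r_i})\ne 0$. Thus $(K_i/K_{i+1})(E_{r_i})=K_i(E_{r_i})$, which by Theorem~\ref{generator}(ii) is the one-dimensional space $k\cdot e^{E_{r_i}}_{E_{r_i},1}$. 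By the parametrization of simple functors recalled in Section~\ref{prel}, a simple functor is determined by its minimal group together with the $\bar E_{E_{r_i}}\cong k[E_{r_i}^*\rtimes\Out(E_{r_i})]$-module structure on that minimal evaluation; so I must show this action on $k\cdot e^{E_{r_i}}_{E_{r_i},1}$ is the trivial one-dimensional module. This is where Proposition~\ref{pro:iso-tw} does the work: $c^\lambda_{G,G}$ sends $e^G_{G,1}$ to $e^G_{\lambda(G),\lambda(1)}=e^G_{G,1}$, so $\Out(G)$ acts trivially, and $\tw^\phi_G e^G_{G,1}=\phi(1)\cdot e^G_{G,1}=e^G_{G,1}$ since $\phi(1)=1$ for any homomorphism $\phi$, so $G^*$ acts trivially. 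Hence the minimal evaluation is the trivial module $1$, and $K_i/K_{i+1}\cong S_{E_{r_i},1}$.

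The uniseriality itself I would simply cite from the Corollary already proved, or re-derive it in one line from part (i) of the preceding proposition: the unique-maximal-subfunctor property of $K_{i+1}$ in $K_i$, applied inductively starting from $K_0=kB^A$, shows the $K_i$ are the \emph{only} subfunctors, hence $kB^A$ is uniserial with composition series the displayed chain.

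\textbf{Main obstacle.} I do not expect a genuine obstacle — all the hard analytic input (the deflation constants in Proposition~\ref{minimalconstant}, the structure of $\bar E_G$, the minimal-group determination) is already in place. The one point requiring a little care is the terminal behaviour when $q=0$: one must argue that $K_1=\{0\}$ rather than just ``$K_1$ has no minimal group in $\mathcal I\setminus\{1\}$''. The clean argument is that any nonzero $A$-fibered $p$-biset subfunctor has at least one minimal group, which by Theorem~\ref{generator}(i) must be elementary abelian of rank in $\mathcal I=\{0,1\}$; but $K_1(1)=K_1(E_{r_1})=0$ by the intersection-kernel construction with $H_0=1=E_{r_0}$, forcing $K_1$ to be zero. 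I would spell this out explicitly.
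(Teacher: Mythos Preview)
Your approach is essentially identical to the paper's: the paper declares everything except the identification of the composition factors already proved, and then computes the $\bar E_{E_{r_i}}$-action on $k\cdot e^{E_{r_i}}_{E_{r_i},1}$ via Proposition~\ref{pro:iso-tw} exactly as you do.

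There is, however, an off-by-one slip in your treatment of the $q=0$ terminal case. You claim $K_1=\{0\}$, but in fact $K_1\ne 0$: its minimal group is $E_{r_1}=C_p$ and $K_1(C_p)=k\cdot e^{C_p}_{C_p,1}$ (this is the simple functor $S_{C_p,1}$ in the short exact sequence the paper records afterward). The chain reads $K_0\supset K_1\supset K_\infty=\{0\}$ with \emph{two} composition factors. Your termination argument is correct but should be applied one step later, to $K_2$: any nonzero subfunctor has a minimal group $E_r$ with $r\in\mathcal I=\{0,1\}$, yet $K_2(1)=0$ (inherited from $K_1$) and $K_2(C_p)=0$ (by the intersection-kernel construction with $H_1=C_p$), forcing $K_2=0$. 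Your equality ``$K_1(1)=K_1(E_{r_1})=0$'' conflates $E_{r_0}=1$ with $E_{r_1}=C_p$; the second evaluation is nonzero.
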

\begin{proof}
All the parts of the theorem is proved except the last claim concerning the simple composition factors.  Note that the quotient $S_i = K_i/K_{i+1}$ is 
simple and the minimal group of the quotient is $E_{r_i}$. Thus it is sufficient to show that for each $i$, the $k$-vector 
space $S_i(E_{r_{i}})$ is the trivial $k[E_{r_{i}}^{*}\rtimes \Out(E_{r_{i}})]$-module.  

However, we have $S_i (E_{r_{i}})=k\cdot e^{E_{r_{i}}}_{E_{r_{i}},1}$. Hence it suffices to show, for 
$\phi \in E_{r_{i}}^{*}$ and $\lambda \in \Out(E_{r_{i}})$, that the effects of the fibered bisets $\tw^{\phi}_{E_{r_{i}},E_{r_{i}}}$ and 
$c^{\lambda}_{E_{r_{i}},E_{r_{i}}}$ on the idempotent $e^{E_{r_{i}}}_{E_{r_{i}},1}$ are trivial. But by Proposition \ref{pro:iso-tw}, we have 
\begin{align*}
\tw^{\phi}_{E_{r_{i}},E_{r_{i}}}\cdot e^{E_{r_{i}}}_{E_{r_{i}},1}= \phi(1)\cdot e^{E_{r_{i}}}_{E_{r_{i}},1} =e^{E_{r_{i}}}_{E_{r_{i}},1}
\end{align*}
and 
\begin{align*}
c^{\lambda}_{E_{r_{i}},E_{r_{i}}}\cdot e^{E_{r_{i}}}_{E_{r_{i}},1}= e^{E_{r_{i}}}_{\lambda(E_{r_{i}}),\lambda(1)}=e^{E_{r_{i}}}_{E_{r_{i}},1}
\end{align*}
as required.  \qed 
\end{proof}
\begin{rem}
As remarked above, a simple fibered biset functor may have two non-isomorphic minimal groups. However, this is not the case for the simple functors
that appear in the previous theorem. Indeed, we already know that any minimal group for a subfunctor of the fibered Burnside functor must be an
elementary abelian $p$-group and, of a given order, there is a unique elementary abelian $p$-group.
\end{rem}
\begin{rem}
Let $k$ be a field of characteristic zero and $A$ be a non-trivial cyclic $p$-group. Then, by the above 
theorem, there is a short exact sequence 
\begin{displaymath}
\begin{diagram}
0&\rTo& S_{C_p,1} &\rTo &kB^A&\rTo& S_{1,1}&\rTo&0
\end{diagram}
\end{displaymath}
of $A$-fibered $p$-biset functors. One can show that the simple head can be identified with the functor $kR_A$ of $A$-monomial characters and the quotient map can be chosen as the linearization map. Hence the above sequence becomes
\begin{displaymath}
\begin{diagram}
0&\rTo& S_{C_p,1} &\rTo &kB^A&\rTo& kR_A&\rTo&0
\end{diagram}
\end{displaymath}

Recall from \cite{boucthevenaz} that, in the case of $p$-biset functors, that is, when $A$ is trivial, the 
corresponding sequence is
\begin{displaymath}
\begin{diagram}
0&\rTo& kD &\rTo &kB&\rTo& kR_{\mathbb Q}&\rTo&0
\end{diagram}
\end{displaymath}
where $k D$ is the functor of torsion-free part of the Dade group, $B$ is the (ordinary) Burnside functor and $k R_{\mathbb Q}$ is the 
functor of rational representations. Existence of this sequence is one of the key results in the classification of endo-permutation modules.
We do not know any natural construction that would match the simple $A$-fibered $p$-biset functor 
seen in the above short exact sequence.
\end{rem}


\begin{thebibliography}{9}

\bibitem{barker} Barker L., ``Fibred permutation sets and the idempotents and units of monomial Burnside rings'', \textit{J. Algebra} 281 (2004) 535-566.

\bibitem{boltjecanonical} Boltje R., ``A general theory of canonical induction formulae'', \textit{J. Algebra} 206 (1998) 293-343.

\bibitem{boltje} Boltje R., ``Representation rings of finite groups, their species and idempotent formulae'', \textit{to appear in J. Algebra}.

\bibitem{boltjecoskun} Boltje R., Co\c{s}kun O.,   ``Fibered Biset Functors'', \textit{arXiv:1612.01117 }.

\bibitem{boucdecomposition} Bouc S., ``Foncteurs d'ensembles munis d'une double action'', \textit{J. Algebra} 183 (1996) 664-736.


\bibitem{boucthevenaz} Bouc S., Thev\'{e}naz J., ``The group of endo-permutation modules'', \textit{Invent. Math} 139 (2000) 
275-349.

\bibitem{BDade}
Bouc S., `The Dade group of a $p$-group',  {\em Invent. Math.} 164 (2006), 189-231. 


\bibitem{bouc} Bouc S., Biset functors for finite groups, Lecture Notes in Math., vol.1990 Springer-Verlag, Berlin (2010).


\bibitem{dressmackey} Dress A.W.M., ``Contributions to theory of induced representations'', \textit{Lecture Notes in Math.} 342, Springer-Verlag, New York, 183-240 (1973).


\bibitem{dress} Dress A.W.M., ``The ring of monomial representations, I. Structure theory'', \textit{J. Algebra} 18 (1971) 137-157.

\bibitem{green} Green J.A., ``Axiomatic representation theory for finite groups'', \textit{J. Pure and Appl. Algebra} 1 (1971) 41-771.

\bibitem{R} Romero N,``On fibered biset functors with fibres order of prime and four", \textit{J. Algebra} 387 (2013), 185?194.

\bibitem{TW}
{Thévenaz J., Webb P.}, `The structure of Mackey functors', {\em Trans. Amer. Math. Soc. } 347 (1995), 1865-1961. 


\end{thebibliography}
\end{document}